\let\sse=\subseteq
\let\noi=\noindent
\let\limply=\Longrightarrow
\def\0{\{0\}}
\def\codim{{\rm codim}\,}
\def\span{{\kern.5pt{\rm span}\kern1pt}}
\def\conv{{\;\longrightarrow\;}}
\def\sconv{{{\buildrel_{\scriptstyle s}\over\conv}}}
\def\uconv{{{\buildrel_{\scriptstyle u}\over\conv}}}
\def\B{{\mathcal B}}
\def\F{{\mathcal F}}
\def\M{{\mathcal M}}
\def\N{{\mathcal N}}
\def\R{{\mathcal R}}
\def\U{{\mathcal U}}
\def\V{{\mathcal V}}
\def\X{{\mathcal X}}
\def\Y{{\mathcal Y}}
\def\BX{{\B[\X]}}
\def\BY{{\B[\Y\kern1pt]}}
\newtheorem{theorem}{Theorem}
\newtheorem{lemma}{Lemma}
\newtheorem{corollary}{Corollary}
\newtheorem{proposition}{Proposition}
\theoremstyle{definition}
\newtheorem{definition}{Definition}
\newtheorem{remark}{Remark}
\numberwithin{theorem}{section}
\numberwithin{lemma}{section}
\numberwithin{corollary}{section}
\numberwithin{proposition}{section}
\numberwithin{conjecture}{section}
\numberwithin{definition}{section}
\numberwithin{remark}{section}
\numberwithin{question}{section}
\begin{document}

\vglue-34pt\noindent
\hfill{\it Studia Scientiarum Mathematicarum Hungarica}\/,
{\bf 55} (2018) 327--344

\vglue20pt
\title{Range-Kernel Complementation}
\author{C.S. Kubrusly}
\address{Applied Mathematics Department, Federal University,
         Rio de Janeiro, RJ, Brazil}
\email{carloskubrusly@gmail.com}
\subjclass{Primary 47A05; Secondary 47A53}
\renewcommand{\keywordsname}{Keywords}
\keywords{Banach-space operators, complementation, closed range,
Fredholm operators}
\date {July 24, 2016}

\begin{abstract}
If a Banach-space operator has a complemented range, then its normed-space
adjoint has a complemented kernel and the converse holds on a reflexive
Banach space$.$ It is also shown when complemented kernel for an operator
is equivalent to complemented range for its normed-space adjoint$.$ This is
applied to compact operators and to compact perturbations$.$ In particular,
compact perturbations of semi-Fredholm operators have complemented range
and kernel for both the perturbed operator and its normed-space adjoint.
\end{abstract}

\maketitle

\vskip-10pt\noi
\section{Introduction}

A subspace (i.e., a closed linear manifold) of a normed space is
complemented if it has a subspace as an algebraic complement$.$ In a
Hilbert space every subspace is complemented, which is not the case in a
Banach space$.$ Banach-space operators with complemented range and kernel
play a crucial role in many aspects of operator theory, especially in
Fredholm theory$.$ In fact, range-kernel complementation is the main issue
behind the difference between the Hilbert-space and the Banach-space
approaches for dealing with Fredholm operators \cite{KD2}$.$ Precisely,
range-kernel complementation is what differentiates the upper-lower approach
and the left-right approach for investigating semi-Fredholm operators$.$ In
particular, it has recently been shown how such a difference, based on the
notion of range-kernel complementation, leads to the characterization of
biquasitriangular operators on a Banach space \cite[Section 3]{KD1},
\cite[Section 6]{KD2}.

\vskip6pt
The main result of this paper is stated in Theorem 3.1 which addresses to
the question on how complementedness for range and kernel travels from an
operator to its adjoint and back, without assuming a priory that the
operator has closed range$.$ This is applied in Corollaries 5.1 to 5.4
towards compact operators and compact perturbations.

\vskip6pt
The paper is organized as follows$.$ Section 2 sets up notation and
terminology, including the concepts of upper-lower and left-right
semi-Fredholmness$.$ Section 3 reports on the difference between the
upper-lower and the left-right approaches to semi-Fredholm operators in
terms of range-kernel complementation, which is stated in Proposition 3.1$.$
All propositions in Sections 3, 4 and 5 are well-known results which are
applied throughout the text, some of them are used quite frequently and so
they are stated in full (whose proofs are always addressed to current
literature)$.$ Section 3 closes with the statement of Theorem 3.1 which
reads as follows$.$ If an operator has a complemented range then its
normed-space adjoint has a complemented kernel and the converse holds on a
reflexive Banach space; dually, it also shows when complemented kernel for
an operator is equivalent to complemented range for its normed-space
adjoint$.$ Section 4 deals with the proof of Theorem 3.1 which is based on
Lemmas 4.1 and 4.2$.$ Applications are considered in Section 5 where
Corollary 5.1 shows when compact operators and their normed-space adjoints
have complemented range and kernel$.$ Corollary 5.2 is split into two parts,
where Part 1 presents a rather simplified proof of a previous result from
\cite{Hol} on compact perturbations of bounded below operators$.$ In
addition its is shown that these (and they normed-space adjoints) have
complemented range and kernel, which is translated to semi-Fredholm
operators in Corollaries 5.3 and 5.4.

\vskip0pt\noi
\section{Notation and Terminology}

Let $\X$ be a linear space and let $\M$ be any linear manifold of $\X.$
Every linear manifold is complemented in the sense that it has a linear
manifold as an algebraic complement$.$ That is, for every linear manifold
$\M$ there is another linear manifold $\N$ for which $\X={\M+\N}$ and
${\M\cap\N}=\0$, where $\N$ is referred to as an algebraic complement of
$\M$ (and vice versa)$.$ Let ${\X/\M}$ stand for the quotient space of $\X$
modulo $\M$ (i.e., the linear space of all cosets ${[x]=x+\M}$ of $x$ modulo
$\M$)$.$ Recall: ${\codim\M=\dim\X/\M}$, where codimension of $\M$ means
dimension of any algebraic complement of $\M.$ If ${L\!:\X\!\to\X}$ is a
linear transformation of a linear space $\X$ into itself, then let
$\N(L)=L^{-1}(\0)$ and $\R(L)=L(\X)$ denote the kernel and range of $L$,
respectively, which are linear manifolds of $\X.$ A projection is an
idempotent (i.e., ${E=E^2}$) linear transformation ${E\!:\X\!\to\X}$ of a
linear space $\X$ into itself, and ${I-E\!:\X\!\to\X}$ is the complementary
projection of $E$, where $\N({I-E})=\R(E)$ and $\R({I-E})=\N(E)$, with
${I\!:\X\to\X}$ standing for the identity transformation.

\vskip6pt
Suppose $\X$ is a normed space$.$ By a subspace of $\X$ we mean a closed
(in the norm topology of $\X$) linear manifold of $\X.$ Let $\M^-\!$
denote the closure (in the norm topology of $\X$) of a linear manifold
$\M$ of $\X$, which is a subspace of $\X.$ Let $\BX$ stand for the normed
algebra of all operators on $\X$, which means of all bounded linear (i.e.,
continuous linear) transformations of $\X$ into itself$.$ The kernel
$\N(T)$ of any operator ${T\in\BX}$ is a subspace (i.e., a closed linear
manifold) of $\X$.

\vskip2pt\noi
\begin{definition}
(See, e.g., \cite[Definition 16.1]{Mul})$.$
Let $\X$ be a Banach space and
consider the following classes of operators on $\X$.
$$
\Phi_+[\X]
=\big\{T\in\BX\!:\,\R(T)\;\hbox{is closed and}\;\dim\N(T)<\infty\big\},
$$
the class of {\it upper semi-Fredholm}\/ operators from $\BX$, and
$$
\Phi_-[\X]
=\big\{T\in\BX\!:\,\R(T)\;\hbox{is closed and}\;\dim\X/\R(T)<\infty\big\},
$$
the class of {\it lower semi-Fredholm}\/ operators from $\BX.$ Set
$$
\Phi[\X]=\Phi_+[\X]\cap\Phi_-[\X],
$$
which is the class of {\it Fredholm}\/ operators from $\BX$.
\end{definition}

\vskip0pt\noi
\begin{definition}
(See, e.g., \cite[Section 5.1]{ST})$.$ Let $\X$ be a Banach space and
consider the following classes of operators on $\X$.
\begin{eqnarray*}
\F_\ell[\X]\!
&\kern-6pt=\kern-6pt&
\!\!\big\{T\!\in\!\BX\!:T\;\hbox{is left essentially invertible}\big\}  \\
&\kern-6pt=\kern-6pt&
\!\!\big\{T\!\in\!\BX\!:S\kern1ptT=I+K\;\hbox{for some}\;S\in\BX
\;\hbox{and some compact}\;K\in\BX\big\}
\end{eqnarray*}
is the class of {\it left semi-Fredholm}\/ operators from $\BX$, and
\begin{eqnarray*}
\F_r[\X]\!
&\kern-6pt=\kern-6pt&
\!\!\big\{T\!\in\!\BX\!:T\;\hbox{is right essentially invertible}\big\} \\
&\kern-6pt=\kern-6pt&
\!\!\big\{T\!\in\!\BX\!:TS=I+K\;\hbox{for some}\;S\in\BX
\;\hbox{and some compact}\;K\in\BX\big\}
\end{eqnarray*}
is the class of {\it right semi-Fredholm}\/ operators from $\BX.$ Set
$$
\F[\X]=\F_\ell[\X]\cap\F_r[\X]
=\big\{T\in\BX\!:T\;\hbox{is essentially invertible}\big\},
$$
which the class of {\it Fredholm}\/ operators from $\BX.$
\end{definition}

\vskip4pt
For a collection of relations among $\Phi_+[\X]$, $\,\Phi_-[\X]$,
$\,\F_\ell[\X]$, and $\F_r[\X]$ see, e.g., \cite[Section 3]{KD2}$.$ In
particular,
$$
\Phi[\X]=\F[\X].
$$
The classes $\Phi_+[\X]$ and $\Phi_-[\X]$ are open in $\BX$ (see, e.g.,
\cite[Proposition 16.11]{Mul}), and so are the classes $\F_\ell[\X]$ and
$\F_r[\X]$ (see e.g., \cite[Proposition XI.2.6]{Con}).

\vskip0pt\noi
\section{Range-Kernel Complementation}

For subspaces, the definition of complementation reads as follows$.$ A
subspace $\M$ of a normed space $\Y$ is {\it complemented}\/ if it has a
subspace as an algebraic complement$.$ In other words, a {\it closed}\/
linear manifold $\M$ of a normed space $\Y$ is complemented if there is a
{\it closed}\/ linear manifold $\N$ of $\Y$ such that $\M$ and $\N$ are
algebraic complements. In other words,
$$
\M+\N=\Y
\quad\;\hbox{and}\;\quad
\M\cap\N=\0.
$$
In this case $\M$ and $\N$ are {\it complementary subspaces}\/ --- one is
the {\it complement}\/ of the other$.$ (Closedness here refers to the norm
topology of $\Y$)$.$ A normed space is {\it complemented}\/ if every
subspace of it is complemented$.$ If a Banach space is complemented, then
it is isomorphic (i.e., topologically isomorphic) to a Hilbert space
\cite{LT} (see also \cite{Kal})$.$ Thus complemented Banach spaces are
identified with Hilbert spaces --- only Hilbert spaces (up to an
isomorphism) are complemented.

\vskip2pt\noi
\begin{definition}
Let $\Y$ be a normed space$.$ Define the following classes of
operators on $\Y$.
\vskip-2pt\noi
$$
\Gamma_R[\Y]=\big\{S\in\BY\!:\,\R(S)^-
\;\hbox{is a complemented subspace of $\Y$}\big\},
$$
$$
\Gamma_N[\Y]=\big\{S\in\BY\!:\,\N(S)
\;\hbox{is a complemented subspace of $\Y$}\big\},
$$
where $\R(S)^-$ stands for the closure of $\R(S)$ in the norm topology
of $\Y$.
\end{definition}

\vskip2pt
Left and right and upper and lower semi-Fredholm operators are linked
by range and kernel complementation$:$ ${T\in\F_\ell[\X]}$ if and only
if ${T\in\Phi_+[\X]}$ and $\R(T)$ is complemented, and ${T\in\F_r[\X]}$
if and only if ${T\in\Phi_-[\X]}$ and $\N(T)$ is complemented.

\vskip2pt\noi
\begin{proposition}
Let\/ $\X$ be a Banach space\/ $\X$.
\begin{eqnarray*}
\F_\ell[\X]
&\kern-6pt=\kern-6pt&
\Phi_+[\X]\cap\Gamma_R[\X]                                              \\
&\kern-6pt=\kern-6pt&
\big\{T\in\Phi_+[\X]\!:\,\R(T)
\;\hbox{\rm is a complemented subspace of $\X$}\big\}.
\end{eqnarray*}
\vskip-6pt\noi
\begin{eqnarray*}
\F_r[\X]
&\kern-6pt=\kern-6pt&
\Phi_-[\X]\cap\Gamma_N[\X]                                              \\
&\kern-6pt=\kern-6pt&
\big\{T\in\Phi_-[\X]\!:\,\N(T)
\;\hbox{\rm is a complemented subspace of $\X$}\big\}.
\end{eqnarray*}
\end{proposition}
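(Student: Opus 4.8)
The plan is to prove the two displayed equalities by establishing two chains of set inclusions, relying on the abstract characterizations of $\F_\ell[\X]$ and $\F_r[\X]$ through classical perturbation results (e.g.\ Atkinson-type theorems found in \cite{Mul} or \cite{Con}). First I would recall the operator-theoretic fact that $T\in\F_\ell[\X]$ if and only if $\R(T)$ is closed, $\dim\N(T)<\infty$, \emph{and} $\N(T)$ is complemented (automatic in finite dimensions) \emph{and} $\R(T)$ is complemented; similarly $T\in\F_r[\X]$ if and only if $\R(T)$ is closed, $\dim\X/\R(T)<\infty$, and $\N(T)$ is complemented. These are the standard left/right essential invertibility descriptions; I would cite \cite[Section 3]{KD2} for the precise statements since the excerpt already flags that reference.

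For the first equality, I would argue $\F_\ell[\X]\subseteq\Phi_+[\X]\cap\Gamma_R[\X]$ as follows: if $T\in\F_\ell[\X]$, write $ST=I+K$ with $K$ compact; then $I+K$ is Fredholm, so $T\in\Phi_+[\X]$ by the standard argument that $\N(T)\subseteq\N(I+K)$ is finite-dimensional and $\R(T)$ is closed (since a left inverse modulo a finite-rank perturbation forces closed range). Moreover $\R(T)$ is complemented: from $ST=I+K$ one produces a bounded projection onto a finite-codimensional, hence complemented, subspace of $\R(T)$, and then adjusts by the finite-dimensional piece — so $T\in\Gamma_R[\X]$, giving $\R(T)^- = \R(T)$ complemented. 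Conversely, if $T\in\Phi_+[\X]$ with $\R(T)$ complemented, then $\R(T)$ is closed with a closed complement $\N$, and $\N(T)$ is finite-dimensional hence complemented in $\X$ by a closed subspace $\M$; the restriction $T|_{\M}\colon\M\to\R(T)$ is a bounded bijection between Banach spaces, hence invertible by the open mapping theorem, and piecing together its inverse with the projections yields $S\in\BX$ and a finite-rank $K$ with $ST=I+K$, so $T\in\F_\ell[\X]$. The second displayed equality is the symmetric claim obtained by interchanging the roles of range and kernel, using $\dim\X/\R(T)<\infty$ in place of $\dim\N(T)<\infty$ and a right-inverse construction $TS=I+K$; I would note it follows by the dual argument (or simply cite the same literature), so no independent work is needed.

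The main obstacle — and the only step that is not pure bookkeeping — is the explicit construction of the compact (in fact finite-rank) perturbation $K$ and the bounded operator $S$ from the complementation data, i.e.\ checking that the algebraic splitting $\X=\N(T)\oplus\M$ and $\X=\R(T)\oplus\N$ genuinely produces \emph{bounded} projections (this is where closedness of both summands, plus the open mapping theorem for $T|_{\M}$, is essential) and that $I-ST$ (or $I-TS$) has finite rank. Everything else is unwinding definitions. Since the paper itself describes all three propositions in this block as "well-known results" whose proofs are "addressed to current literature," I would keep this brief and simply record the two inclusion arguments in outline, citing \cite[Section 5.1]{ST} and \cite[Section 3]{KD2} for the details.
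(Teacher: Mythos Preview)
Your proposal is correct. The paper's own proof of this proposition is nothing more than a citation to \cite[Theorems 16.14, 16.15]{Mul} together with the parenthetical remark that $\R(T)^-=\R(T)$ whenever $T\in\Phi_+[\X]\cup\Phi_-[\X]$ (which is why the $\Gamma_R$ condition on the closure reduces to complementation of $\R(T)$ itself). Your sketch---constructing $S$ and a finite-rank $K$ from the splitting $\X=\N(T)\oplus\M=\R(T)\oplus\N$ via the open mapping theorem, and conversely extracting $\Phi_+$ and complemented range from a relation $ST=I+K$---is exactly the standard argument behind the cited theorems in M\"uller, so you are supplying the details the paper deliberately delegates to the literature rather than taking a genuinely different route.
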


\begin{proof}
\cite[Theorems 16.14, 16.15]{Mul} (since ${\R(T)^-\kern-3pt=\kern-1pt\R(T)}$
if\/ ${T\!\in\kern-1pt\Phi_+[\X]\cup\Phi_-[\X]}).\!$
\end{proof}

\vskip2pt
In particular, if a Banach space $\X$ is complemented (i.e., if $\X$ is
isomorphic to a Hilbert space), then Proposition 3.1 says
$$
\Phi_+[\X]=\F_\ell[\X]
\quad\;\hbox{and}\;\quad
\Phi_-[\X]=\F_r[\X].
$$
Conditions leading to the above identities have been considered in
\cite[Lemma 3.1 and Remark 3.1]{KD2}$.$ In such a particular case the
upper-lower and the left-right approaches for dealing with semi-Fredholm
operators coincide (as it is always case in a Hilbert space setting).

\vskip6pt
Throughout the text let $\X^*$ denote for the dual space of a normed space
$\X$ (and so $\X^*$ is a Banach space), and let ${T^*\in\B[\X^*]}$ stand
for the normed-space adjoint of an operator ${T\in\BX}$ (see, e.g.,
\cite[Section 3.1]{Meg} or \cite[Section 3.2]{Sch}).

\vskip2pt\noi
\begin{remark}
As it is known
$$
T\in\Phi_+[\X]
\quad\iff\quad
T^*\in\Phi_-[\X^*],
$$
\vskip-2pt\noi
$$
T\in\Phi_-[\X]
\quad\iff\quad
T^*\in\Phi_+[\X^*].
$$
(See, e.g., \cite[Theorem 16.4]{Mul}.) Moreover,
$$
T\in\F_\ell[\X]
\quad\iff\quad
T^*\in\F_r[\X^*],
$$
\vskip-2pt\noi
$$
T\in\F_r[\X]\quad\iff\quad
T^*\in\F_\ell[\X^*].
$$
Actually, as it is readily verified the above equivalences hold in a
Hilbert space --- see, e.g., \cite[Section 2]{Kub} or
\cite[Section 5.1]{ST} --- whose proof extends naturally from Hilbert
spaces to reflexive Banach spaces by using standard properties of
normed-space adjoints (see, e.g.,
\cite[Propositions 3.1.4, 3.1.10, and 3.1.13]{Meg}) as well as Schauder
Theorem on compactness for normed-space adjoints of compact operators on
a Banach space --- see e.g., \cite[Theorem 3.4.15]{Meg}).
\end{remark}

\vskip2pt
Remark 3.1 motivates the question on how complementedness for range and
kernel travels from an operator to its normed-space adjoint$.$ In light
of Proposition 3.1 and Remark 3.1 we might expect
\vskip2pt\noi
\begin{description}
\item{$\kern-4pt$\rm(a)}
$\,{T\in\Gamma_R[\X]}$ if and only if ${T^*\in\Gamma_N[\X^*]}$,
\vskip2pt
\item{$\kern-4pt$\rm(b)}
$\,{T\in\Gamma_N[\X]}$ if and only if ${T^*\in\Gamma_R[\X^*]}$.
\end{description}
\vskip2pt\noi
Indeed, Proposition 3.1 and Remark 3.1 might suggest the above equivalence,
although when using them one is bound to assume a priory semi-Fredholmness,
thus one is bound to assume a priori operators with closed range$.$ In fact,
we show that (a) holds true without the closedness assumption (up to
reflexivity in one direction), and (b) also holds without the closedness
assumption (up to reflexivity in one direction) and, in the opposite
direction, it holds if $\R(T)$ is closed$.$ This is stated below
(Theorem 3.1) and is proved in Section 4 independently of its relationship
with semi-Fredholm operators$.$ That is, without using any properties of the
classes of upper-lower or left-right semi-Fredholm operators (where ranges
necessarily satisfy the particular assumption of closedness).

\vskip2pt\noi
\begin{theorem}
Let\/ $\X$ be a Banach space and take any operator\/ ${T\in\BX}$.
\vskip2pt\noi
\begin{description}
\item{$\kern-9pt$\rm(a$_1$)}
If\/ $\R(T)^-$ is complemented, then\/ $\N(T^*)$ is complemented\/:
$$
T\in\Gamma_R[\X]
\quad\limply\quad
T^*\in\Gamma_N[\X^*].
$$
\item{$\kern-9pt$\rm(a$_2$)}
If $\X$ is reflexive and $\N(T^*)$ is complemented, then $\R(T)^-$ is
complemented\/:
$$
\X
\;\hbox{\rm reflexive and}\;\;
T^*\in\Gamma_N[\X^*]
\quad\limply\quad
T\in\Gamma_R[\X].
$$
\item{$\kern-9pt$\rm(b$_1$)}
If\/ $\X$ is reflexive and $\R(T^*)^-$ is complemented, then\/ $\N(T)$
is complemented\/:
$$
\X
\;\hbox{\rm reflexive and}\;\;
T^*\in\Gamma_R[\X^*]
\quad\,\limply\quad
T\in \Gamma_N[\X].
$$
\item{$\kern-9pt$\rm(b$_2$)}
If\/ $\R(T)$ is closed and\/ $\N(T)$ is complemented then\/ $\R(T^*)$
is complemented\/:
$$
\kern20pt
\R(T)=\R(T)^-
\;\hbox{\rm and}\;\;
T\in \Gamma_N[\X]
\;\limply\;
\R(T^*)=\R(T^*)^-
\;\hbox{\rm and}\;\;
T^*\in\Gamma_R[\X^*].
$$
\end{description}
\end{theorem}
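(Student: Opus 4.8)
The plan is to reduce everything to statements about annihilators and to the standard duality dictionary between ranges, kernels, and annihilators of $T$ and $T^*$. Recall the basic identities, valid for any ${T\in\BX}$: $\N(T^*)=\R(T)^\perp$ (the annihilator of $\R(T)$ in $\X^*$), and $\N(T)={}^\perp\R(T^*)=\R(T^*)^\top$ (the pre-annihilator of $\R(T^*)$ in $\X$), together with $\R(T)^-={}^\perp\N(T^*)$ and, when $\R(T)$ is closed, $\R(T^*)^-=\N(T)^\perp$ with $\R(T^*)$ itself closed. I expect the paper's Lemmas 4.1 and 4.2 to package exactly these facts, perhaps phrased as: complementedness of a subspace $\M$ of $\X$ is closely tied to complementedness (or weak-$*$ complementedness) of $\M^\perp$ in $\X^*$, the cleanest version being that if $\M$ is complemented in $\X$ then $\M^\perp$ is complemented in $\X^*$, and conversely under reflexivity.

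For (a$_1$): since $\R(T)^-$ is complemented in $\X$, write $\X=\R(T)^-\oplus\N$ with $\N$ a subspace; the adjoint projection $E^*$ of the bounded projection $E$ onto $\R(T)^-$ along $\N$ is a bounded projection on $\X^*$, and one checks $\R(E^*)=\N^\perp$, $\N(E^*)=(\R(T)^-)^\perp=\R(T)^\perp=\N(T^*)$. Hence $\N(T^*)$ is the kernel of a bounded projection, so it is complemented in $\X^*$; thus ${T^*\in\Gamma_N[\X^*]}$. For (a$_2$): assume $\X$ reflexive and $\N(T^*)$ complemented in $\X^*$. Then $\N(T^*)=\R(T)^\perp$ is complemented, and under reflexivity the pre-annihilator and annihilator constructions are mutually inverse on subspaces, so ${}^\perp\big(\R(T)^\perp\big)=\R(T)^-$ inherits complementedness in $\X=\X^{**}$ — concretely, apply (a$_1$)-type reasoning to the subspace $\R(T)^\perp$ of $\X^*$, take the adjoint projection on $\X^{**}$, and identify $\X^{**}$ with $\X$; its kernel is ${}^\perp\R(T)^\perp=\R(T)^-$. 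So ${T\in\Gamma_R[\X]}$.

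For (b$_1$): assume $\X$ reflexive and $\R(T^*)^-$ complemented in $\X^*$. By reflexivity $T$ is (up to the canonical identification) the adjoint of $T^*$, i.e. $T=(T^*)^*\!\restriction$, so this is precisely case (a$_1$) applied to the operator $T^*\in\B[\X^*]$ on the Banach space $\X^*$: $\R(T^*)^-$ complemented forces $\N\big((T^*)^*\big)=\R(T^*)^\perp$ complemented in $\X^{**}=\X$; and $\R(T^*)^\perp={}^\perp\big(\overline{\R(T^*)}\big){}^{\,\perp}$... more directly $\N(T)=\R(T^*)^\top=\R(T^*)^\perp$ under reflexivity, so $\N(T)$ is complemented and ${T\in\Gamma_N[\X]}$. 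For (b$_2$): assume $\R(T)$ closed and $\N(T)$ complemented in $\X$. Closedness of $\R(T)$ gives, by the closed-range theorem, that $\R(T^*)$ is closed and $\R(T^*)=\N(T)^\perp$. Write $\X=\N(T)\oplus\M$ with $\M$ a subspace and let $E$ be the bounded projection onto $\M$ along $\N(T)$; then $E^*$ is a bounded projection on $\X^*$ with $\R(E^*)=\N(T)^\perp=\R(T^*)$, so $\R(T^*)$ is the range of a bounded projection, hence complemented, and it is already closed. Thus ${T^*\in\Gamma_R[\X^*]}$ with $\R(T^*)=\R(T^*)^-$.

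The main obstacle, and the reason Lemmas 4.1 and 4.2 are needed rather than a one-line citation, is the converse direction in (a$_2$) and (b$_1$): passing complementedness from a subspace of $\X^*$ back to the corresponding subspace of $\X$. Without reflexivity this fails in general (a complemented subspace of $\X^*$ of the form $\M^\perp$ need not force $\M$ complemented in $\X$), and even with reflexivity one must be careful that the canonical isomorphism $\X\cong\X^{**}$ carries $T$ to $T^{**}$ and carries the relevant annihilators to the right subspaces — in particular that ${}^\perp(\M^\perp)=\M$ for a \emph{closed} subspace $\M$, which is where the Hahn–Banach separation theorem enters. So I would first prove a clean lemma: for a subspace $\M$ of a normed space $\X$, if $\M$ is complemented then $\M^\perp$ is complemented in $\X^*$ (via the adjoint projection argument above), and if $\X$ is reflexive then $\M^\perp$ complemented in $\X^*$ implies $\M$ complemented in $\X$ (apply the first part to $\M^\perp\subseteq\X^*$ and use $^\perp(\M^\perp)=\M$ together with $\X^{**}=\X$). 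A second lemma would record the duality identities $\N(T^*)=\R(T)^\perp$, $\N(T)={}^\perp\R(T^*)$, and the closed-range consequences $\R(T^*)$ closed with $\R(T^*)=\N(T)^\perp$ when $\R(T)$ is closed. Given these two lemmas, all four parts (a$_1$), (a$_2$), (b$_1$), (b$_2$) follow by the short arguments sketched above, with (b$_1$) obtained from (a$_1$) by duality and (a$_2$) from the reflexive half of the first lemma.
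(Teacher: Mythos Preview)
Your argument is correct and reaches the same conclusions, but the route differs from the paper's in one structural respect. You prove the basic transfer lemma (``$\M$ complemented in $\X$ $\Rightarrow$ $\M^\perp$ complemented in $\X^*$'') by taking the adjoint $E^*$ of a bounded projection $E$ and reading off $\N(E^*)=\R(E)^\perp$ and $\R(E^*)=\N(E)^\perp$; for the reflexive converses (a$_2$), (b$_1$) you then iterate this to $\X^{**}$ and pull back through the canonical identification $\X\cong\X^{**}$. The paper instead introduces an intermediate notion of \emph{weak$^*$ complementation} in $\X^*$: Lemma~4.2(b) shows, with no reflexivity hypothesis, that if $\U\subseteq\X^*$ is weak$^*$ complemented then ${}^\perp\U$ is complemented in $\X$ (the proof uses Kato's annihilator identities $(\M+\N)^\perp=\M^\perp\cap\N^\perp$ and $\M^\perp+\N^\perp=(\M\cap\N)^\perp$ together with $({}^\perp\U)^\perp=\U$ for weak$^*$ closed $\U$), and Lemma~4.1 shows that reflexivity is exactly what upgrades ordinary complementation in $\X^*$ to weak$^*$ complementation. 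Your approach is shorter and avoids the weak$^*$ machinery entirely; the paper's approach is more modular, isolating a non-reflexive statement (Lemma~4.2(b)) that may have independent interest, and pinpointing reflexivity as the sole obstruction via the equality of the weak and weak$^*$ topologies on $\X^*$. Your anticipated ``second lemma'' matches the paper's identities (6)--(9), and (b$_2$) is handled identically in both proofs via the closed range theorem and $\R(T^*)=\N(T)^\perp$.
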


\vskip0pt\noi
\section{Proof of Theorem 3.1}

A subspace $\M$ of a normed space $\Y$ is {\it weakly complemented}\/ if it
is weakly closed (i.e., closed in the weak topology of $\Y$) and there
exists a weakly closed linear manifold $\N$ of $\Y$ (so that $\N$ is closed
in the norm topology of $\Y$, and hence $\N$ is a subspace of $\Y$) for
which
$$
\M+\N=\Y
\quad\;\hbox{and}\;\quad
\M\cap\N=\0.
$$
Similarly, if $\X$ is a normed space, then a subspace $\U\!$ of the Banach
space $\X^*$ (the dual of $\X$) is {\it weakly* complemented}\/ if it is
weakly* closed (i.e., closed in the weak* topology of $\X^*$) and there
exists a weakly* closed linear manifold $\V$ of $\X^*$ (so that $\V$ is
closed in the norm topology of $\X^*\!$, and hence $\V$ is a subspace of
$\X^*$) for which
$$
\U\!+\V=\X^*
\quad\;\hbox{and}\;\quad
\U\cap\V=\0.
$$
Thus weak complementation in $\Y$ (or weak* complementation in $\X^*$) is
obtained from (plain) complementation in $\Y$ (or in $\X^*$) if closeness
in the norm topology of $\Y$ (or in the norm topology of $\X^*$) is replaced
with closeness in the weak topology of $\Y$ (or with closeness in the
weak* topology of $\X^*\!$) for both complemented subspaces $\M$ and $\N$
(or $\U\!$ and $\V)$.

\vskip2pt\noi
\begin{definition}
Let $\Y$ be a normed space$.$ Define the following classes of operators
on $\Y$.
\vskip-2pt\noi
$$
w\hbox{-}\Gamma_R[\Y]=\big\{S\!\in\B[\Y]\!:\,
\hbox{$\R(S)^-$ is weak complemented in $\Y$}\big\},
$$
\vskip-2pt\noi
$$
w\hbox{-}\Gamma_N[\Y]=\big\{S\!\in\B[\Y]\!:\,
\hbox{$\N(S)$ is weak complemented in $\Y$}\big\}.
$$
\end{definition}

\vskip0pt\noi
\begin{definition}
Let $\X$ be a normed space$.$ Define the following classes of operators
on its dual $\X^*\!$.
$$
w^*\hbox{-}\Gamma_R[\X^*]=\big\{F\!\in\B[\X^*]\!:\,
\hbox{$\R(F)^-$ is weak* complemented in $\X^*$}\big\},
$$
$$
w^*\hbox{-}\Gamma_N[\X^*]=\big\{F\!\in\B[\X^*]\!:\,
\hbox{$\N(F)$ is weak* complemented in $\X^*$}\big\}.
$$
\end{definition}

\vskip4pt
Here $\R(S)^-$ and $\R(F)^-$ are the closures of the ranges in the norm
topology of $\Y$ or $\X^*\!$, which are now supposed to be, in addition,
closed in the weak topology of $\Y$ (Definition 4.1) or closed in the
weak* topology of $\X^*\!$ (Definition 4.2), as also are $\N(S)$ and
$\N(F)$, to meet the definitions of weak and weak* complementation.

\vskip2pt\noi
\begin{proposition}
\vskip6pt\noi
A convex set in a normed space is closed\/ $($in the norm topology\/$)$ if
and only if it is weakly closed\/ {\rm(i.e.}, closed in the weak
topology\/$).$ {\rm In particular,} a linear manifold of a normed space is
closed if and only if it is weakly closed.
\end{proposition}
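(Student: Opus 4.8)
The plan is to establish the two implications of the equivalence separately; only one of them carries any content. The implication ``weakly closed $\Rightarrow$ norm closed'' is immediate and needs no convexity: the weak topology of a normed space is coarser than its norm topology, so every weakly closed set is norm closed. Thus the whole point is the converse, namely that a norm-closed \emph{convex} set is weakly closed --- this is essentially Mazur's theorem.

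For the converse I would show that the complement of such a set is weakly open. Fix a norm-closed convex set $C\subseteq\Y$ and a point $x_0\in\Y\setminus C$. The key step is the geometric (separation) form of the Hahn--Banach theorem applied to the disjoint pair consisting of the compact convex set $\{x_0\}$ and the closed convex set $C$: it produces a functional $f\in\Y^*$ strictly separating them. Over real scalars this gives a real number $\alpha$ with $f(x_0)<\alpha\le f(y)$ for every $y\in C$; over complex scalars one first applies the real version to the real part $\mathrm{Re}\,f$ and then recovers the complex-linear $f$ in the usual way. Then $U=\{y\in\Y:\mathrm{Re}\,f(y)<\alpha\}$ is a weakly open neighbourhood of $x_0$ that misses $C$. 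Since $x_0\in\Y\setminus C$ was arbitrary, $\Y\setminus C$ is weakly open, i.e., $C$ is weakly closed, as required.

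The final ``in particular'' clause follows at once, since a linear manifold of $\Y$ is in particular a convex subset of $\Y$, so the equivalence just proved applies to it verbatim.

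The only nontrivial ingredient is the separation statement itself; everything else is either a comparison of the weak and norm topologies or a one-line observation. Accordingly I would not reprove Hahn--Banach separation but cite it from a standard reference, flagging only the minor bookkeeping needed to pass between real and complex scalars when $\Y$ is a complex normed space.
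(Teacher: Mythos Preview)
Your argument is correct and is exactly the standard proof of this classical fact (Mazur's theorem) via Hahn--Banach separation. The paper itself does not prove Proposition~4.1 at all: its ``proof'' consists solely of citations to Conway \cite[Theorem V.1.4, Corollary V.1.5]{Con} and Megginson \cite[Theorem 2.5.16, Corollary 2.5.17]{Meg}, whose proofs are essentially the one you have sketched.
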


\begin{proof}
See, e.g.,
\cite[Theorem V.1.4, Corollary V.1.5]{Con} or
\cite[Theorem 2.5.16, Corollary 2.5.17]{Meg}.
\end{proof}

\vskip0pt\noi
\begin{corollary}
If $\X$ is a normed space, then
$$
w\hbox{-}\Gamma_R[\X]=\Gamma_R[\X],
\qquad
w\hbox{-}\Gamma_R[\X^*]=\Gamma_R[\X^*].
$$
$$
w\hbox{-}\Gamma_N[\X]=\Gamma_N[\X],
\qquad
w\hbox{-}\Gamma_N[\X^*]=\Gamma_N[\X^*].
$$
\end{corollary}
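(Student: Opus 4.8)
The plan is to read off all four identities from Proposition 4.1, applied once to the normed space $\X$ and once to the normed space $\X^*$ (the latter being a Banach space, but only its normed-space structure and its \emph{weak} --- not weak* --- topology enter here). First I would unpack what the two definitions ask of a subspace $\M$ of a normed space $\Y$. For ``$\M$ complemented'' one needs a \emph{norm-closed} linear manifold $\N$ with $\M+\N=\Y$ and $\M\cap\N=\0$; for ``$\M$ weakly complemented'' one needs, in addition, $\M$ to be weakly closed and $\N$ to be \emph{weakly closed}. By Proposition 4.1 a linear manifold of $\Y$ is norm-closed if and only if it is weakly closed. Hence a linear manifold $\N$ qualifies as a norm-closed algebraic complement of $\M$ exactly when it qualifies as a weakly closed algebraic complement of $\M$; so for a \emph{subspace} $\M$ the two notions ``complemented'' and ``weakly complemented'' coincide, the proviso ``$\M$ weakly closed'' being automatic since $\M$ is already norm-closed.

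Next I would specialize $\M$. For any $S\in\B[\Y]$ the set $\R(S)^-$ is by construction a subspace of $\Y$, and the kernel $\N(S)$ is a subspace of $\Y$; in particular both are norm-closed, so by the previous paragraph each of them is complemented if and only if it is weakly complemented. This gives $w\hbox{-}\Gamma_R[\Y]=\Gamma_R[\Y]$ and $w\hbox{-}\Gamma_N[\Y]=\Gamma_N[\Y]$ for every normed space $\Y$. Putting $\Y=\X$ yields the first of each displayed pair of identities in the statement, and putting $\Y=\X^*$ (a normed space in its own right) yields the second.

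There is no real obstacle here: the argument is a direct translation of norm-closedness into weak-closedness through Proposition 4.1. The only point deserving a moment's attention is that the definition of weak complementation imposes weak closedness on \emph{both} subspaces in the pair, not just on the complement; but for $\M=\R(S)^-$ and $\M=\N(S)$ this extra condition costs nothing, precisely because these subspaces are norm-closed to begin with.
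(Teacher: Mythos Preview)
Your proposal is correct and follows exactly the route the paper indicates: the paper's own proof reads simply ``Straightforward from Definitions 3.1 and 4.1, and Proposition 4.1,'' and what you have written is precisely the unpacking of that one-liner. Your care in checking that weak closedness is required of \emph{both} subspaces in the complemented pair, and that this is automatic for $\R(S)^-$ and $\N(S)$, is the only point that needed attention, and you handle it correctly.
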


\begin{proof}
Straightforward from Definitions 3.1 and 4.1, and Proposition 4.1.
\end{proof}

\vskip0pt\noi
\begin{proposition}
{\rm(a)} If\/ $\Y$ is a normed space and\/ ${E\!:\Y\to\Y}$ is a continuous
projection, then\/ $\R(E)$ and\/ $\N(E)$ are complementary subspaces of\/
$\Y.$ Conversely, {\rm(b)} if\/ $\M$ and\/ $\N$ are complementary subspaces
of a Banach space\/ $\Y$, then the\/ $($unique\/$)$ projection\/
${E\!:\Y\to\Y}$ with\/ $\R(E)=\M$ and\/ $\N(E)=\N$ is continuous $($and so
every complemented subspace of a Banach space is the range of a bounded
linear operator$)$.
\end{proposition}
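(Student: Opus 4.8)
The plan is to treat the two implications of Proposition 4.2 separately, using in (a) only continuity of the projection and in (b) the completeness of $\Y$ through a closed-graph (or, alternatively, an open-mapping) argument. For (a), let $E$ be a continuous idempotent on $\Y$. Then $\N(E)$ is a subspace, being the kernel of a bounded operator, and $\R(E)=\N(I-E)$ (as recorded in Section 2), so $\R(E)$ is the kernel of the bounded operator $I-E$ and hence also a subspace. For the complementation I would use the decomposition $x=Ex+(I-E)x$ for each $x\in\Y$: here $Ex\in\R(E)$ and $(I-E)x\in\N(E)$ since $E(I-E)x=(E-E^2)x=0$, so $\Y=\R(E)+\N(E)$; and if $x\in\R(E)\cap\N(E)$ then $x=Ex=0$, so $\R(E)\cap\N(E)=\0$. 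Thus $\R(E)$ and $\N(E)$ are complementary subspaces of $\Y$, and completeness plays no role here.

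For (b), let $\M$ and $\N$ be complementary subspaces of the Banach space $\Y$. Since $\M+\N=\Y$ and $\M\cap\N=\0$, every $x\in\Y$ has a unique representation $x=u+v$ with $u\in\M$ and $v\in\N$, so $Ex:=u$ defines a linear idempotent $E$ on $\Y$ with $\R(E)=\M$ and $\N(E)=\N$; this is the only projection with that range and kernel, since $Ex$ is forced to be the $\M$-component of $x$. The substantive step, and the one place where the Banach hypothesis is indispensable, is the continuity of $E$. I would obtain it from the Closed Graph Theorem: if $x_n\to x$ and $Ex_n\to z$ in $\Y$, then $z\in\M$ because $\M=\R(E)$ is closed, and $x_n-Ex_n=(I-E)x_n\to x-z$ with $x-z\in\N$ because $\N=\N(E)$ is closed; hence $x=z+(x-z)$ is the $\M$-$\N$ decomposition of $x$, which forces $Ex=z$, so the graph of $E$ is closed and, $\Y$ being complete, $E$ is continuous.

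An alternative route to the continuity of $E$, which I would also record, bypasses the graph computation: $\M$ and $\N$ are closed in the Banach space $\Y$, hence Banach spaces themselves, so the external direct sum $\M\oplus\N$ normed by $\|(u,v)\|=\|u\|+\|v\|$ is Banach; the addition map $(u,v)\mapsto u+v$ is then a continuous bijection of $\M\oplus\N$ onto $\Y$, so by the Open Mapping Theorem its inverse is continuous, and $E$ is the composition of that inverse with the bounded coordinate projection $(u,v)\mapsto u$. Finally, the parenthetical assertion follows at once: a complemented subspace $\M$ of a Banach space has a complement $\N$, and the bounded projection $E$ just constructed satisfies $\R(E)=\M$. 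I expect the only real obstacle to be the verification of continuity in (b); part (a) and the algebraic content of (b) are routine bookkeeping with the direct-sum decomposition.
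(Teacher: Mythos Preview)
Your argument is correct in both parts: the algebraic bookkeeping in (a) and the closed-graph (or open-mapping) step in (b) are exactly the standard way to establish this result, and you have identified precisely where completeness enters. There is nothing to fix.

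The paper, however, does not supply its own proof here; it simply cites \cite[Theorem 3.2.14 and Corollary 3.2.15]{Meg} and \cite[Problem 4.35]{EOT}. So strictly speaking there is no ``paper's approach'' to compare against --- your write-up is a self-contained verification of what the paper treats as a well-known background fact. The argument in Megginson is essentially the one you gave (closed-graph for (b)), so your proposal is in line with the cited source rather than a genuinely different route.
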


\begin{proof}
See, e.g., \cite[Theorem 3.2.14 and Corollary 3.2.15]{Meg} or
\cite[Problem 4.35]{EOT}.
\end{proof}

\vskip0pt\noi
\begin{proposition}
Let\/ $\X$ be a normed space$.$ Suppose\/ ${T\!:\X\to\X}$ is linear.
\vskip2pt\noi
\begin{description}
\item{$\kern-4pt$\rm(a)}
$T$ is continuous 
$($in the norm topology of\/ $\X)$ if and only if it is weakly continuous
{\rm(i.e.,} continuous in the weak topology of\/ $\X)$; that is,
$$
T\in\BX \quad \iff \quad T
\hbox{ is weakly continuous.}
$$
\vskip2pt\noi
\item{$\kern-4pt$\rm(b)}
If\/ $T$ is continuous in the norm topology of\/ $\X$, then\/ $T^*\!$ is
continuous in the weak* topology of\/ $\X^*$; that is,
$$
T\in\BX \quad \limply \quad T^*\in\B[\X^*]
\hbox{ is weakly* continuous.}
$$
\end{description}
\vskip2pt\noi
Conversely, if
\vskip2pt\noi
\begin{description}
\item{$\kern-4pt$\rm(c$_1$)}
${F\!:\X^*\to\X^*}\!$ is linear and continuous in the weak* topology
of\/ $\X^*\!$,
\end{description}
then
\begin{description}
\item{$\kern-4pt$\rm(c$_2$)}
${F=S^*\!}$ for some operator ${S\in\BX}$,
\end{description}
which implies
\begin{description}
\item{$\kern-4pt$\rm(c$_3$)}
${F\in\B[\X^*]}$ 
\quad
{\rm(i.e.,} $F$ is linear and continuous in the norm topology of\/ $\X^*)$.
\end{description}
\end{proposition}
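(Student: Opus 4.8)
The plan is to read parts (a) and (b) off the universal property of the weak and weak* topologies, and to obtain (c) as the genuine converse; I expect the only delicate point to be the reverse implication of (a), since one cannot invoke a closed graph argument on $\X$ itself ($\X$ being only normed). Write $J_\X\colon\X\to\X^{**}$ for the canonical isometric embedding and, for $x\in\X$, $\widehat{x}=J_\X x\in\X^{**}$, so $\widehat{x}(f)=f(x)$. Recall that the weak topology of a normed space $\Y$ is the coarsest topology making every $g\in\Y^*$ continuous, and the weak* topology of $\X^*$ is the coarsest topology making every $\widehat{x}$ $(x\in\X)$ continuous; hence a linear map into $\Y$ (with its weak topology) is continuous if and only if its composite with each $g\in\Y^*$ is continuous, and a linear map into $\X^*$ (with its weak* topology) is continuous if and only if its composite with each $\widehat{x}$ is continuous.

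For (a), the forward implication is immediate: if $T\in\BX$, then $f\circ T=T^*f\in\X^*$ for every $f\in\X^*$, hence $f\circ T$ is weakly continuous, hence $T$ is weakly continuous. For the reverse implication --- the main obstacle --- suppose $T$ is weakly continuous. Then each $f\circ T$ is continuous on $\X$ with its weak topology, hence norm continuous, so $f\circ T\in\X^*$; this defines a linear map $T^\times\colon\X^*\to\X^*$ by $T^\times f=f\circ T$. Its graph is norm closed: if $f_n\to f$ and $T^\times f_n\to g$ in $\X^*$, then for each $x\in\X$ we have $f_n(Tx)\to f(Tx)$ and $f_n(Tx)=(T^\times f_n)(x)\to g(x)$, so $g=f\circ T=T^\times f$. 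Since $\X^*$ is complete, the closed graph theorem gives $T^\times\in\B[\X^*]$, hence $T^{\times\times}\in\B[\X^{**}]$. One checks $T^{\times\times}J_\X=J_\X T$, so $\|Tx\|=\|J_\X Tx\|=\|T^{\times\times}J_\X x\|\le\|T^{\times\times}\|\,\|x\|$, and thus $T\in\BX$ (and then $T^\times=T^*$).

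For (b), let $T\in\BX$. For each $x\in\X$, the map $f\mapsto(T^*f)(x)=f(Tx)=\widehat{Tx}(f)$ is the evaluation at $Tx$, hence weak* continuous; by the universal property of the weak* topology on the codomain $\X^*$, $T^*$ is weak*-to-weak* continuous.

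For (c), assume (c$_1$). For each $x\in\X$ the functional $f\mapsto(Ff)(x)$ on $\X^*$ is linear and, being $\widehat{x}\circ F$ with $F$ weak* continuous, it is weak* continuous; since the dual of $\X^*$ in its weak* topology is $\X$ (every weak*-continuous linear functional on $\X^*$ is $\widehat{y}$ for a unique $y\in\X$; see, e.g., \cite{Meg}), there is a unique $y_x\in\X$ with $(Ff)(x)=f(y_x)$ for all $f$. Then $Sx:=y_x$ defines a linear map $S\colon\X\to\X$ (linearity by uniqueness) with $f\circ S=Ff\in\X^*$ for every $f\in\X^*$, so $S$ is weakly continuous and hence $S\in\BX$ by (a), and $F=S^*$; this is (c$_2$). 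Finally (c$_2$) $\Rightarrow$ (c$_3$) is the routine estimate $\|S^*f\|=\|f\circ S\|\le\|S\|\,\|f\|$, giving $F=S^*\in\B[\X^*]$. Apart from the closed-graph/bidual step in (a), the argument uses only the defining properties of the weak and weak* topologies together with the standard facts that $\X^*$ is complete and that the weak*-continuous linear functionals on $\X^*$ are exactly the evaluations at points of $\X$.
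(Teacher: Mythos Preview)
Your argument is correct. The paper does not actually prove this proposition; it simply cites \cite[Theorem 2.5.11, Theorem 3.1.11 and Corollary 3.1.12]{Meg}, so there is no in-paper proof to compare against beyond the reference itself. What you have written is, in effect, a self-contained version of the standard textbook arguments those citations point to. The one place worth highlighting is your handling of the reverse implication in (a): because $\X$ is only assumed normed, a closed-graph argument on $\X$ is unavailable, and you correctly move the argument to the (automatically complete) dual by showing that the algebraic transpose $T^\times$ has closed graph in $\X^*\times\X^*$, then pulling boundedness back to $T$ via $T^{\times\times}J_\X=J_\X T$. An equally standard alternative at that step is to apply the uniform boundedness principle on $\X^*$ to the family $\{J_\X(Tx):\|x\|\le1\}$; both routes exploit completeness of the dual rather than of $\X$, so yours is entirely in the spirit of the cited sources.
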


\begin{proof}
See, e.g., \cite[Theorem 2.5.11, Theorem 3.1.11 and Corollary 3.1.12]{Meg}.
\end{proof}

\vskip0pt\noi
\begin{proposition}
$\!$Let $\X\!$ be a Banach space$.$ The following assertions are equivalent.
\vskip2pt\noi
\begin{description}
\item{$\kern-4pt$\rm(a)}
$\X$ is reflexive.
\vskip2pt
\item{$\kern-4pt$\rm(b)}
$\X^*$ is reflexive.
\vskip2pt
\item{$\kern-4pt$\rm(c)}
The weak* and the weak topologies in\/ $\X^*$ coincide
\vskip0pt\noi
{\rm(i.e.,} $\sigma(\X^*\!,\X)=\sigma(\X^*\!,\X^{**})\,)$.
\end{description}
\end{proposition}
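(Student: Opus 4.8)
The plan is to establish the cycle of implications (a)$\,\Rightarrow\,$(c)$\,\Rightarrow\,$(a), and then to adjoin (b) via (a)$\,\Rightarrow\,$(b)$\,\Rightarrow\,$(a); throughout I work with the canonical isometric embeddings $J_\X\colon\X\to\X^{**}$ and $J_{\X^*}\colon\X^*\to\X^{***}$, reflexivity of a Banach space meaning surjectivity of its canonical embedding into its second dual. For (a)$\iff$(c) the main tool is the elementary duality fact that, for a linear space $E$ and a linear subspace $G$ of its algebraic dual, the topological dual of $(E,\sigma(E,G))$ is $G$ itself: a $\sigma(E,G)$-continuous linear functional is dominated by finitely many of the seminorms $|f(\cdot)|$, $f\in G$, hence is a linear combination of the corresponding $f$'s. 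Taking $E=\X^*$, the $\sigma(\X^*,\X)$-continuous linear functionals on $\X^*$ are exactly those of the form $x^*\mapsto x^*(x)$ with $x\in\X$ (i.e.\ the members of $J_\X(\X)$), while the $\sigma(\X^*,\X^{**})$-continuous ones are exactly the elements of $\X^{**}$. Hence the two topologies coincide if and only if $J_\X(\X)=\X^{**}$, which is precisely reflexivity of $\X$; this yields (a)$\iff$(c).

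For (a)$\,\Rightarrow\,$(b): if $J_\X$ is surjective, then given $\varphi\in\X^{***}$ the functional $x^*:=\varphi\circ J_\X$ belongs to $\X^*$, and for any $F\in\X^{**}$, say $F=J_\X(x)$, one computes $J_{\X^*}(x^*)(F)=F(x^*)=(\varphi\circ J_\X)(x)=\varphi(F)$; thus $J_{\X^*}(x^*)=\varphi$, so $J_{\X^*}$ is surjective and $\X^*$ is reflexive. For (b)$\,\Rightarrow\,$(a): suppose $\X^*$ is reflexive and apply the already-proven equivalence (a)$\iff$(c) to $\X^*$ in place of $\X$; this says that on $\X^{**}$ the weak topology $\sigma(\X^{**},\X^{***})$ and the weak* topology $\sigma(\X^{**},\X^*)$ coincide. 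Since $J_\X$ is isometric and $\X$ is complete, $J_\X(\X)$ is a norm-closed linear manifold of $\X^{**}$, hence weakly closed by Proposition 4.1, hence (by the previous sentence) weak*-closed in $\X^{**}$. But $J_\X(\X)$ is weak*-dense in $\X^{**}$ by Goldstine's theorem, and a set that is both weak*-closed and weak*-dense is the whole space, so $J_\X(\X)=\X^{**}$ and $\X$ is reflexive.

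The step I expect to be the main obstacle is (b)$\,\Rightarrow\,$(a): reflexivity of $\X$ and of $\X^*$ are intertwined through $\X^{**}$, so one must avoid circularity by bringing in a genuinely asymmetric ingredient, here Goldstine's theorem (the only nonelementary external input). A fully equivalent alternative route replaces Goldstine's theorem and Proposition 4.1 by the Banach--Alaoglu theorem together with Kakutani's characterization of reflexivity as weak compactness of the closed unit ball: then (c)$\,\Rightarrow\,$(b) is immediate ($B_{\X^*}$ is weak* compact, hence weakly compact once the two topologies agree), and (b)$\,\Rightarrow\,$(a) follows by transporting weak compactness of $B_{\X^{**}}$ along the isometry $J_\X$ to $B_\X$.
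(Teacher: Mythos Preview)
Your argument is correct. Note, however, that in the paper this proposition is one of the auxiliary results for which no proof is given: the paper simply refers the reader to \cite[p.346, Problem 16.N]{BP} and \cite[Theorem V.4.2]{Con}. Thus there is no proof in the paper against which to compare yours; you have supplied a complete self-contained argument where the paper only cites the literature. Your route --- identifying the topological dual of $(E,\sigma(E,G))$ with $G$ to obtain (a)$\iff$(c), then handling (b)$\Rightarrow$(a) via Goldstine's theorem together with Proposition~4.1 --- is one of the standard treatments, and the alternative you sketch (Banach--Alaoglu plus Kakutani's weak-compactness characterization of reflexivity) is another; both are in the spirit of the references the paper cites.
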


\begin{proof}
See, e.g., \cite[p$.$346, Problem 16.N]{BP} or \cite[Theorem V.4.2]{Con}.
\end{proof}

\vskip0pt\noi
\begin{proposition}
If\/ $\R(S)$ is the range of an operator\/ ${S\in\BX}$ on a Banach space\/
$\X$, then the following assertions are equivalent.
\vskip2pt\noi
\begin{description}
\item{$\kern-4pt$\rm(a)}
$\R(S)$ is closed in the norm topology of\/ $\X$.
\vskip2pt
\item{$\kern-4pt$\rm(b)}
$\R(S^*)$ is closed in the norm topology of\/ $\X^*\!$.
\vskip2pt
\item{$\kern-4pt$\rm(c)}
$\R(S^*)$ is closed in the weak* topology of\/ $\X^*\!$.
\end{description}
\end{proposition}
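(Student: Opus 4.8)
This is, in substance, the Banach Closed Range Theorem together with a weak* refinement, so a reference such as \cite{Con} or \cite{Meg} settles it at once; still, here is the plan I would follow for a self-contained argument. I would prove the cycle $(a)\Rightarrow(c)\Rightarrow(b)\Rightarrow(a)$. The implication $(c)\Rightarrow(b)$ is immediate, since the weak* topology of $\X^*$ is coarser than its norm topology, so every weakly* closed linear manifold of $\X^*$ is closed in the norm topology.

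For $(a)\Rightarrow(c)$ the key is the identity $\R(S^*)=\{f\in\X^*:f(x)=0\ \hbox{for all}\ x\in\N(S)\}$, the annihilator of $\N(S)$. The inclusion $\R(S^*)\subseteq\{f:f|_{\N(S)}=0\}$ is trivial, because $(S^*f)(x)=f(Sx)$ vanishes when $x\in\N(S)$. For the reverse inclusion I would use that $\R(S)$, being closed, is a Banach space, so by the open mapping theorem $S$ induces a topological isomorphism $\widehat S\colon\X/\N(S)\to\R(S)$; given $f$ vanishing on $\N(S)$, push $f$ down to a bounded functional on $\X/\N(S)$, carry it along $\widehat S^{-1}$ to a bounded functional on $\R(S)$, extend it by Hahn--Banach to some $g\in\X^*$, and check that $S^*g=f$. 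Being the intersection of the kernels of the evaluation functionals at the points of $\N(S)$ --- each of which is weakly* continuous on $\X^*$ --- the annihilator of $\N(S)$ is weakly* closed; hence so is $\R(S^*)$.

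The substantive implication is $(b)\Rightarrow(a)$. I would first reduce to dense range: put $\Y_0=\R(S)^-$, write $S=\iota S_0$ with $S_0\colon\X\to\Y_0$ the corestriction of $S$ and $\iota\colon\Y_0\hookrightarrow\X$ the inclusion; then $\iota^*\colon\X^*\to\Y_0^*$ is surjective by Hahn--Banach, so $\R(S_0^*)=S_0^*(\Y_0^*)=S_0^*\iota^*(\X^*)=\R(S^*)$, which is norm closed by $(b)$, while $S_0$ has dense range, so $S_0^*$ is injective. Thus $S_0^*$ is a continuous linear bijection of the Banach space $\Y_0^*$ onto the Banach space $\R(S_0^*)$ (a closed subspace of $\X^*$), and the inverse mapping theorem makes $S_0^*$ bounded below. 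Finally I would recover surjectivity of $S_0$ from this lower bound on its adjoint: a Hahn--Banach separation argument shows that some ball about $0$ in $\Y_0$ lies in the norm closure of the image under $S_0$ of the unit ball of $\X$, after which the completeness (iteration) lemma underlying the open mapping theorem upgrades this to $S_0(\X)=\Y_0$; therefore $\R(S)=\Y_0$ is closed.

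The main obstacle is precisely this last step. Unlike $(a)\Rightarrow(c)$, it cannot lean on a ready-made isomorphism $\X/\N(S)\cong\R(S)$ --- producing exactly such an isomorphism is the goal --- so surjectivity of $S_0$ has to be manufactured from the lower bound on $S_0^*$ through separation plus the open mapping theorem's iteration lemma. That is where Banach's theorem does its real work, and it is the only place where anything beyond routine verification is required.
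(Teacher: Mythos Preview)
Your proposal is correct and aligned with the paper: the paper's own ``proof'' of this proposition is nothing more than the citation ``See, e.g., \cite[Theorem VI.1.10]{Con} or \cite[Theorem 3.1.21]{Meg}'', precisely the references you name in your first sentence. Your self-contained sketch of the cycle $(a)\Rightarrow(c)\Rightarrow(b)\Rightarrow(a)$ --- the annihilator identity $\R(S^*)=\N(S)^\perp$ under $(a)$, the trivial $(c)\Rightarrow(b)$, and the reduction-to-dense-range plus separation/iteration argument for $(b)\Rightarrow(a)$ --- is sound and simply goes further than the paper does.
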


\begin{proof}
See, e.g.,\cite[Theorem VI.1.10]{Con} or \cite[Theorem 3.1.21]{Meg}.
\end{proof}

\vskip0pt\noi
\begin{lemma}
For every normed space $\X$,
\vskip6pt\noi
{\,\rm(i)}
$\;w^*\hbox{-}\Gamma_R[\X^*]\sse\Gamma_R[\X^*]$
\quad\;\hbox{and}\;\quad
$w^*\hbox{-}\Gamma_N[\X^*]\sse\Gamma_N[\X^*]$.
\vskip6pt\noi
{\rm (ii)}
$\,$If\/ $\X$ is a reflexive Banach space, then reverse inclusions hold:
\vskip2pt\noi
\begin{description}
\item{$\kern-4pt$\rm(a)}
$\;\Gamma_R[\X^*]\sse w^*\hbox{-}\Gamma_R[\X^*]$,
\vskip2pt
\item{$\kern-4pt$\rm(b)}
$\;\Gamma_N[\X^*]\sse w^*\hbox{-}\Gamma_N[\X^*]$.
\end{description}
\end{lemma}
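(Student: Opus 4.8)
The plan is to reduce the lemma to a comparison of three topologies on $\X^*$ --- the norm topology, the weak topology $\sigma(\X^*,\X^{**})$, and the weak* topology $\sigma(\X^*,\X)$ --- as they act on linear manifolds, together with the observation that weak* complementation differs from plain complementation only in the closedness demanded of the two complementary subspaces.

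For part (i), I would begin from the elementary fact that the weak* topology of $\X^*$ is coarser than its norm topology, so that every weak*-closed subset of $\X^*$ is norm-closed. Hence, if a subspace $\M$ of $\X^*$ is weak* complemented --- say ${\M+\N=\X^*}$ and ${\M\cap\N=\0}$ with $\M$ and $\N$ both weak*-closed --- then $\M$ and $\N$ are norm-closed linear manifolds that are algebraic complements of each other, i.e., $\M$ is complemented. Taking $\M=\R(F)^-$ yields $w^*\hbox{-}\Gamma_R[\X^*]\sse\Gamma_R[\X^*]$, and taking $\M=\N(F)$ yields $w^*\hbox{-}\Gamma_N[\X^*]\sse\Gamma_N[\X^*]$. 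Note that no assumption on $\X$ is needed for this direction.

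For parts (a) and (b), I would use reflexivity of $\X$ to upgrade ``norm-closed'' back to ``weak*-closed'' for subspaces. Proposition 4.1, applied to the normed space $\X^*$ (whose weak topology is $\sigma(\X^*,\X^{**})$), says that a linear manifold of $\X^*$ is norm-closed if and only if it is weakly closed; and Proposition 4.5 says that, $\X$ being reflexive, the weak and the weak* topologies of $\X^*$ coincide. Chaining the two, a linear manifold of $\X^*$ is norm-closed if and only if it is weak*-closed. Consequently, given $F\in\Gamma_R[\X^*]$, the norm-closed complementary subspaces $\R(F)^-$ and $\N$ provided by plain complementation are automatically weak*-closed, so $\R(F)^-$ is weak* complemented and $F\in w^*\hbox{-}\Gamma_R[\X^*]$; replacing $\R(F)^-$ by $\N(F)$ throughout gives $\Gamma_N[\X^*]\sse w^*\hbox{-}\Gamma_N[\X^*]$.

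I do not expect a genuine obstacle here; the one point requiring care is keeping straight which of $\sigma(\X^*,\X)$ and $\sigma(\X^*,\X^{**})$ is the weak and which is the weak* topology of $\X^*$, and recognizing that it is precisely reflexivity (Proposition 4.5) that makes norm-closedness and weak*-closedness coincide for subspaces --- which is why the reverse inclusions (a)--(b) need the reflexivity hypothesis while (i) does not.
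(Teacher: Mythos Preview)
Your proof of (i) is essentially the paper's own: weak* closedness implies norm closedness because the weak* topology is coarser. For (a) and (b), however, you take a genuinely shorter route than the paper. You argue directly that, under reflexivity, every norm-closed linear manifold of $\X^*$ is weak*-closed (via Proposition~4.1 together with the coincidence of the weak and weak* topologies on $\X^*$), and then simply observe that the two complementary subspaces furnished by membership in $\Gamma_R[\X^*]$ or $\Gamma_N[\X^*]$ are therefore already weak*-closed. The paper instead works harder: given a complemented subspace of $\X^*$, it invokes Proposition~4.2(b) to produce a bounded projection $P$ onto it, then uses Propositions~4.3 and~4.4 to conclude that $P$ is weak*-continuous and hence equal to $S^*$ for some $S\in\BX$, and finally appeals to the closed-range theorem (Proposition~4.5(b,c)) to deduce that $\R(S^*)$ is weak*-closed. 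Your argument bypasses projections, adjoints, and the closed-range theorem entirely, exploiting only the convex-set characterisation of weak closure; this is cleaner and makes transparent that the whole content of (a)--(b) is the single fact ``norm-closed subspace $=$ weak*-closed subspace'' in a reflexive setting. One small correction: the statement that reflexivity forces the weak and weak* topologies on $\X^*$ to coincide is Proposition~4.4(c), not Proposition~4.5.
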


\begin{proof}
(i) The inclusions
$$
w^*\hbox{-}\Gamma_R[\X^*]\sse\Gamma_R[\X^*]
\quad\;\hbox{and}\;\quad
w^*\hbox{-}\Gamma_N[\X^*]\sse\Gamma_N[\X^*]
$$
hold since weak* complementation implies complementation in the norm
topology$.$ In fact, weak* closedness implies closedness in the norm
topology --- reason: weak* topology in $\X^*$ is weaker than the weak
topology in $\X^*\!$, which in turn is weaker than the norm topology in
$\X^*\!.$ In other words,
$\sigma(\X^*\!,\X)\sse\sigma(\X^*\!,\X^{**})\sse\sigma(\X^*\!,\X^*)$.

\vskip6pt
\hskip20pt
(ii) To prove the reverse inclusions, proceed as follows.

\vskip6pt\noi
(a) $\Gamma_R[\X^*]\sse w^*\hbox{-}\Gamma_R[\X^*].$ Indeed, suppose
${F\in\Gamma_R[\X^*]}.$ So the subspace $\R(F)^-$ is complemented in
$\X^*\!.$ That is, there is a subspace $\V$ (a linear manifold of
$\X^*$ closed in the norm topology of $\X^*\!$) for which
$\X^*\!={\R(F)^-\!+\V}$ and ${\R(F)^-\!\cap\V}=\0.\!$ Thus both subspaces
$\R(F)^-\!$ and $\V$ are complemented in the Banach space $\X^*\!$, and this
implies both $\R(F)^-\!$ and $\V$ are ranges of operators in $\B[\X^*]$;
precisely, there are operators $P$ and $P'={I-P}$ in $\B[\X^*]$ (with closed
range) for which $\R(F)^-\!=\R(P)$ and $\V=\R(P')$ --- cf$.$
Proposition 4.2(b)$.$ Since ${P,P'\in\B[\X^*]}$ are continuous in the norm
topology of $\X^*$, they are continuous in the weak topology of $\X^*$ by
Proposition 4.3(a)$.$ Thus if $\X$ is reflexive, then by
Proposition 4.4(a,c) $P$ and $P'$ on $\X^*$ are weakly* continuous, and
therefore $P$ and $P'$ are normed-space adjoints of some operators
${S,S'\in\BX}$ according to Proposition 4.3(c); that is, ${P=S^*}$ and
${P'={S'}^*}\!.$ Recall: $\R(P)$ and $\R(P')$ are closed in the norm
topology of $\X^*\!$, and so $\R(S^*)$ and $\R({S'}^*)$ are closed in the
norm topology of $\X^*\!.$ Since $\R(F)^-\!=\R(P)=\R(S^*)$ and
$\V=\R(P')=\R({S'}^*)$, the subspaces $\R(F)^-\!$ and $\V$ are closed ranges
of the normed-space adjoints $S^*$ and ${S'}^*$ of operators $S$ and $S'$
(closed in the norm topology of $\X^*$), and therefore they are closed in
the weak* topology of $\X^*\!$ according to Proposition 4.5(b,c)$.$ Then
${F\!\in w^*\hbox{-}\Gamma_R[\X^*]}.$ Therefore
$\Gamma_R[\X^*]\sse w^*\hbox{-}\Gamma_R[\X^*]$.

\vskip6pt\noi
(b) $\Gamma_N[\X^*]\sse w^*\hbox{-}\Gamma_N[\X^*].$ In fact,
suppose ${F\in \Gamma_N[\X^*]}.$ Thus the subspace $\N(F)$ is complemented
in $\X^*\!.$ That is, there exists another subspace $\V$ of $\X^*$ for which
$\X={\N(F)+\V}$ and ${\N(F)\cap\V}=\0.$ Hence $\N(F)$ and $\V$ are
complemented subspaces of the Banach space $\X^*\!$, and so they are
(closed) ranges of normed-space adjoints of some operators in $\B[\X]$,
and therefore the subspaces $\N(F)$ and $\V$ are weakly* closed by using
the same argument as in item (a)$.$ Then
${T^*\in w^*\hbox{-}\Gamma_N[\X^*]}.$ Therefore
$\Gamma_N[\X^*]\sse w^*\hbox{-}\Gamma_N[\X^*].$
\end{proof}

\vskip4pt
Let $\X$ be a normed space$.$ The annihilator of a nonempty set ${A\in\X}$
is the set ${A^\perp\in\X^*}$ given by
\begin{eqnarray*}
A^\perp
&\kern-6pt=\kern-6pt&
\big\{f\in\X^*\!:A\sse\N(f)\big\}
=\big\{f\in\X^*\!:f(x)=0\;\hbox{for every}\;x\in A\big\}                \\
&\kern-6pt=\kern-6pt&
\bigcap_{x\in A}\{f\in\X^*\!:{x\in\N(f)}\}
=\bigcap_{x\in A}\{x\}^\perp
=\big\{f\in\X^*\!:f(A)=\0\big\},
\end{eqnarray*}
which is a subspace of $\X^*\!$, where
$\{x\}^\perp\!=\!\{{f\in\X^*}\!:{x\in\N(f)}\}\!\sse\!\X^*\!$ for every
${x\in\X}.$ The pre-annihilator of a nonempty set ${B\in\X^*}$ is the set
${^\perp\!B\in\X}$ given by
$$
{^\perp\!B}=\bigcap_{f\in B}\N(f)
=\big\{x\in\X\!:f(x)=0\;\hbox{for every}\;f\in B\big\}
=\bigcap_{f\in B}{^\perp\{f\}},
$$
which is a subspace of $\X$, where ${^\perp\{f\}}={\N(f)\sse\X}$ for every
${f\in\X^*}.$ The following identities are trivially verified:
${\0^\perp\!=\X^*}\!$, $\,{\X^\perp\!=\0}$,
${^\perp\!\0=\X}$, ${^\perp\!\X^*\!=\0}$.

\vskip6pt
If $\M$ is a linear manifold of a normed space $\X$, then
(see, e.g., \cite[Theorem 4.6-A]{Tay}, \cite[Problem 4.63]{EOT})
$$
{^\perp(\M^\perp)}=\M^-
\quad\hbox{(closure in $\X$)}.                   \leqno\phantom{(0)}
$$
If\/ $\U$ is a linear manifold of $\X^*\!$, then (see, e.g.,
\cite[Theorem 4.6-B]{Tay}),
$$
\U^-\sse({^\perp\U})^\perp
\quad\hbox{(closure in $\X^*$)}.                 \leqno(1)
$$
A linear manifold $\U$ of $\X^*\!$ was called {\it saturated}\/ in
\cite{Tay} if ${\U\!=({^\perp\U})^\perp}\!.$ If $\U$ is saturated, then
$\U$ is closed in $\X^*$ ($\U=\U^-$), and the converse fails: a closed
linear manifold of $\X^*\!$ (i.e., a subspace of $\X^*\!$) may not be
saturated \cite[p.225]{Tay}$.$ But $\U$ is saturated if and only if it is
weakly* closed \cite[Theorem 4.62-A]{Tay}; that is, closed in the weak*
topology ${\sigma(\X^*\!,\X)}$ of $\X^*\!$ \cite[p.209]{Tay})$.$ In general
this cannot be straightened to closedness in the weak topology
${\sigma(\X^*\!,\X^{**})}$ of $\X^*\!$ (which is stronger than
${\sigma(\X^*\!,\X)}$), since closedness of $\U$ in the weak topology
${\sigma(\X^*\!,\X^{**})}$ of $\X^*\!$ coincides with closed\-ness of $\U$
in the norm topology of $\X^*\!$ (Proposition 4.1)$.$ Summing up
(see also \cite[Corollary 16.5]{BP}):
$$
\U=({^\perp\U})^\perp
\;\;\iff\;\;
\U\;\hbox{is weak* closed in $\X^*$}.            \leqno(2)
$$
If $\M$ and $\N$ are subspaces of a Banach space $\X$,
then \cite[Theorem IV.4.8]{Kat}
$$
(\M+\N)^\perp=\M^\perp\!\cap\N^\perp,            \leqno(3)
$$
$$
\M^\perp\!+\N^\perp
\;\;\hbox{is closed in $\X^*$}
\;\;\iff\;\;
\M+\N
\;\;\hbox{is closed in $\X$},                    \leqno(4)
$$
\vskip4pt\noi
and if ${\M+\N}$ is closed in $\X$, then
$$
\M^\perp\!+\N^\perp=(\M\cap\N)^\perp.            \leqno(5)
$$

\vskip0pt\noi
\begin{lemma}
Let $\X$ be a Banach space.
\vskip2pt\noi
\begin{description}
\item{$\kern-4pt$\rm(a)}
If\/ $\M$ is a complemented subspace of\/ $\X$, then\/ $\M^\perp\!$ is a
complemented subspace of\/ $\X^*\!.$ Moreover, if\/ ${\N\sse\X}$ is a
complement of\/ ${\M\sse\X}$, then\/ ${\N^\perp\sse\X^*}$ is a complement
of\/ ${\M^\perp\sse\X^*}$.
\vskip2pt
\item{$\kern-4pt$\rm(b)}
If\/ $\U$ is an weak* complemented subspace of\/ $\X^*\!$, then\/
${^\perp\U}$ is a complemented subspace of\/ $\X.$ Also, if\/
${\V\sse\X^*}$ is a complement of\/ ${\U\sse\X^*\!}$, then\/
${{^\perp\V}\sse\X}$ is a complement of\/ ${{^\perp\U}\sse\X}$.
\end{description}
\end{lemma}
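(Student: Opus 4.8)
The plan is to verify each part by passing the projection decomposition through the annihilator/pre-annihilator operations, using the identities (1)–(5) together with the characterization (2) of weak*-closedness.

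For part (a), suppose $\M$ and $\N$ are complementary subspaces of $\X$, so $\M+\N=\X$ and $\M\cap\N=\0$. Since $\M+\N=\X$ is trivially closed, identity (5) applies and gives $\M^\perp+\N^\perp=(\M\cap\N)^\perp=\0^\perp=\X^*$. Meanwhile identity (3) gives $\M^\perp\cap\N^\perp=(\M+\N)^\perp=\X^\perp=\0$. Hence $\M^\perp$ and $\N^\perp$ are (closed) linear manifolds of $\X^*$ that are algebraic complements of each other, i.e. $\N^\perp$ is a complement of $\M^\perp$ in $\X^*$, which is precisely the assertion. First I would note that $\M^\perp$ and $\N^\perp$ are automatically subspaces of $\X^*$ (annihilators are always norm-closed), so no extra closedness check is needed. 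This part is essentially immediate once (3) and (5) are in hand.

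For part (b), suppose $\U$ and $\V$ are weak*-complemented subspaces of $\X^*$ with $\U+\V=\X^*$ and $\U\cap\V=\0$. The goal is to show ${^\perp\V}$ is a complement of ${^\perp\U}$ in $\X$. By weak*-complementation both $\U$ and $\V$ are weak*-closed, so by (2) we have $\U=({^\perp\U})^\perp$ and $\V=({^\perp\V})^\perp$. Now apply the pre-annihilator to $\U\cap\V=\0$: since ${^\perp(\,\cdot\,)}$ turns intersections of (any) families into— more carefully, I would use that ${^\perp(\U+\V)}={^\perp\U}\cap{^\perp\V}$ (the pre-annihilator analogue of (3), valid because ${^\perp(\bigcap\text{of annihilators})}$ unfolds directly from the definitions), which gives ${^\perp\U}\cap{^\perp\V}={^\perp(\U+\V)}={^\perp\X^*}=\0$. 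For the sum, I would use the relation ${^\perp}(\U\cap\V)\supseteq{}^\perp\U+{}^\perp\V$ in general, and equality when $\U+\V$ is weak*-closed — here $\U+\V=\X^*$ is certainly weak*-closed — together with ${^\perp(\U\cap\V)}={^\perp\0}=\X$. This yields ${^\perp\U}+{^\perp\V}=\X$, completing the proof that ${^\perp\V}$ complements ${^\perp\U}$ in $\X$. Both ${^\perp\U}$ and ${^\perp\V}$ are norm-closed subspaces of $\X$ automatically.

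The main obstacle is part (b): the annihilator identities (1)–(5) as stated in the text are phrased for annihilators of subspaces of $\X$ in $\X^*$, not for pre-annihilators of subspaces of $\X^*$ in $\X$, so I would first need to record the pre-annihilator versions — namely ${^\perp(\U+\V)}={^\perp\U}\cap{^\perp\V}$ and, when $\U+\V$ is weak*-closed, ${^\perp(\U\cap\V)}={^\perp\U}+{^\perp\V}$ — and justify them, either by a direct unfolding from the definitions (for the first) or by applying the $\X^*$-side identities (3)–(5) to $\X^{**}$ and pulling back along the canonical embedding, using the weak*-closedness hypotheses to stay inside $\X$. Alternatively, since weak*-closed subspaces of $\X^*$ are exactly those of the form $\M^\perp$ for $\M$ a subspace of $\X$, I would write $\U=\M^\perp$, $\V=\N^\perp$ with $\M={^\perp\U}$, $\N={^\perp\V}$, and then part (b) reduces to: if $\M^\perp$ and $\N^\perp$ are complementary in $\X^*$ then $\M$ and $\N$ are complementary in $\X$ — which, via ${^\perp(\M^\perp)}=\M$, ${^\perp(\N^\perp)}=\N$ and the two pre-annihilator identities above, is the mirror image of part (a). Once the bookkeeping of which identities transpose correctly is settled, everything is a short computation.
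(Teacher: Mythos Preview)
Your proposal is correct and matches the paper's argument: part (a) is verbatim the paper's use of (3) and (5), and your ``alternative'' route for part (b) --- writing $\U=({^\perp\U})^\perp$, $\V=({^\perp\V})^\perp$ via (2) and then applying (3) and (5) with $\M={^\perp\U}$, $\N={^\perp\V}$ --- is exactly what the paper does, so the pre-annihilator identities you flag as the ``main obstacle'' never need to be stated separately. One small point you should make explicit (the paper glosses over it too): from $({^\perp\U}+{^\perp\V})^\perp=\0$ you only get density of ${^\perp\U}+{^\perp\V}$ in $\X$; to conclude equality you need closedness, which follows from (4) since $({^\perp\U})^\perp+({^\perp\V})^\perp=\U+\V=\X^*$ is closed.
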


\begin{proof}
(a) If $\M$ is complemented subspace of a normed space $\X$, then there is
a subspace $\N$ of $\X$ for which
$$
\M+\N=\X
\quad\hbox{and}\quad
\M\cap\N=\0.
$$
Thus ${\M+\N}$ is closed in $\X.$ If $\X$ is a Banach space, then
${\M^\perp\!+\N^\perp}$ is closed in $\X^*$ by (4), and by (3,5)
$$
\M^\perp\!+\N^\perp=(\M\cap\N)^\perp=\0^\perp=\X^*\!,
$$
$$
\M^\perp\!\cap\N^\perp=(\M+\N)^\perp=\X^\perp=\0.
$$
Therefore the subspace $\N^\perp$ is a complement of $\M^\perp\!$, and so
the subspace $\M^\perp$ is complemented in $\X^*$.

\vskip6pt\noi
(b) Let $\X^*$ be the dual space of the normed space $\X$, which is itself
a Banach space$.$ Suppose $\U$ is complemented in $\X^*\!.$ Thus there is
a subspace $\V$ of $\X^*$ for which
$$
\U+\V=\X^*
\quad\hbox{and}\quad
\U\cap\V=\0.
$$
So according to (1) and (5) we get
$$
\X^*=\U+\V\sse({^\perp\U})^\perp+({^\perp\V})^\perp
=({^\perp\U}\,\cap\,{^\perp\V})^\perp.
$$
Hence
\vskip-4pt\noi
$$
^\perp\U\,\cap\,^\perp\V=\0.
$$
Now suppose $\U$ is weak* complemented$.$ Then $\U$ is weakly*
closed in $\X^*\!$ and we can take the subspace $\V$ weakly* closed
in $\X^*$ as well$.$ From (2) we get
$$
({^\perp\U})^\perp=\U
\quad\;\hbox{and}\;\quad
({^\perp\V})^\perp=\V,
$$
\vskip-4pt\noi
and from and (3),
$$
\0=\U\cap\V=({^\perp\U})^\perp\cap({^\perp\V})^\perp
=({^\perp\U}\,+{^\perp\V})^\perp.
$$
\vskip-2pt\noi
Hence
\vskip-6pt\noi
$$
{^\perp\U}\,+{^\perp\V}=\X,
$$
which concludes the proof$:$ the subspace $^\perp\U$ is complemented
in $\X$, where ${^\perp\V}$ is a complement of it.
\end{proof}

\vskip4pt
As it is well known (see, e.g,
\cite[p.9]{Aie}, \cite[p.135]{Heu}, \cite[Theorems 4.6-C,D,F,G]{Tay})
$$
{\R(T)^-}^\perp\!=\N(T^*),                       \leqno(6)
$$
$$
\R(T)^-\!={^\perp\N(T^*)},                       \leqno(7)
$$
$$
{^\perp\R(T^*)^-}\!=\N(T),                       \leqno(8)
$$
$$
\R(T^*)^-\!\sse\N(T)^\perp\!,                    \leqno\phantom{(0)}
$$
where the above inclusion becomes an identity if $\R(T)$ is closed in $\X$
(equivalently, if $\R(T^*)$ is closed in $\X^*$)
\cite[Theorem IV.5.13]{Kat}:
$$
\R(T^*)^-\!=\N(T)^\perp
\qquad\hbox{if $\,\R(T^*)\,$ is closed}.         \leqno(9)
$$

\vskip6pt\noi
{\bf Proof of Theorem 3.1.}
Let $T$ be an operator on a Banach space $\X$.
\vskip6pt\noi
If ${T\in\Gamma_R[\X]}$ (i.e., if $\R(T)^-$ is complemented in $\X$), then
by Lemma 4.2(a) $\R(T)^{-\perp}$ is complemented in $\X^*\!.$ Since
${\R(T)^{-\perp}\!=\N(T^*)}$ by (6), it follows that ${\N(T^*)}$ is
complemented in $\X^*\!$, which means ${T\in\Gamma_N[\X^*]}.$ Therefore
$$
T\in\Gamma_R[\X]
\quad\;\limply\quad
T^*\in\Gamma_N[\X^*].                            \eqno(\rm{a}_1)
$$
On the other hand, if $\X$ is reflexive, then by Lemma 4.1(b)
$$
T^*\in\Gamma_N[\X^*]
\quad\limply\quad
T^*\in w^*\hbox{-}\Gamma_N[\X^*].
$$
But if ${T^*\in w^*\hbox{-}\Gamma_N[\X^*]}$ (i.e., if $\N(T^*)$ is weak*
complemented in $\X^*\!$), then ${^\perp\!\N(T^*})$ is complemented in $\X$
according to Lemma 4.2(b)$.$ Since $\R(T)^-\!={^\perp\!\N(T^*)}$ by (7),
then $\R(T)^-$ is complemented in $\X$, which means ${T\in\Gamma_R[\X]}.$
Therefore
$$
T^*\in w^*\hbox{-}\Gamma_N[\X^*]
\quad\;\limply\quad
T\in\Gamma_R[\X].
$$
Hence if $\X$ is reflexive
$$
T^*\in\Gamma_N[\X^*]
\quad\;\limply\quad
T\in\Gamma_R[\X].                                 \eqno(\rm{a}_2)
$$
\vskip2pt\noi
With the assumption that $\X$ is reflexive still in force we get by
Lemma 4.1(a)
$$
T^*\in\Gamma_R[\X^*]
\quad\limply\quad
T^*\in w^*\hbox{-}\Gamma_R[\X^*].
$$
However, if ${T^*\!\in w^*\hbox{-}\Gamma_R[\X^*]}$ (i.e., if $\R(T^*)^-\!$
is weak* complemented in $\X^*\!$), then
${^\perp\R(T^*)^-}$ is complemented in $\X$ by Lemma 4.2(b)$.$ But
${\N(T)={^\perp\R(T^*)^-}}$ according to (8), and so $\N(T)$ is
complemented in $\X$, which means ${T\in\Gamma_N[\X]}.$ Thus
$$
T^*\in w^*\hbox{-}\Gamma_R[\X^*]
\quad\;\limply\quad
T\in\Gamma_N[\X].
$$
Hence if $\X$ is reflexive
$$
T^*\in\Gamma_R[\X^*]
\quad\;\limply\quad
T\in\Gamma_N[\X].                                  \eqno(\rm{b}_1)
$$
On the other hand, if ${T\in\Gamma_N[\X]}$ (i.e., if $\N(T)$ is
complemented in $\X$), then $\N(T)^\perp$ is complemented in $\X^*\!$ by
Lemma 4.2(a)$.$ If $\R(T)$ is closed, then $\R(T^*)=\R(T^*)^-\!=\N(T)^\perp$
by Proposition 4.5(a,b) and (9)$.$ Therefore ${\R(T^*)}$ is closed
and complemented in $\X^*$ (i.e., $\R(T^*)=\R(T^*)^-\kern-1pt$ and
$T^*\kern-1pt\in\Gamma_R[\X^*]\,).$ Thus
$$
\kern-7pt
\R(T)=\R(T)^-
\;\hbox{\rm and}\;\;
T\in\Gamma_N[\X]
\;\;\limply\;
\R(T^*)=\R(T^*)^-
\;\hbox{\rm and}\;\;
T^*\in\Gamma_R[\X^*].\!\!                          \eqno(\rm{b}_2)
$$
The converse follows from item (b$_1$), if $\X$ is reflexive, and
Proposition 4.5(a,b).\hfill$\qed$

\vskip0pt\noi
\section{Applications}

This section deals with the class ${\Gamma_R[\X]\cap\Gamma_N[\X]}$ of all
operators $T$ on a normed space $\X$ for which $\R(T)^-$ and $\N(T)$ are
complemented$.$ (Operators with this property are sometimes called inner
regular \cite[Section 0]{DK} --- see also \cite[Theorem 3.8.2]{Har}.)

\vskip6pt
Under reasonable conditions, compact operators and their normed-space
adjoints have complemented (closure of) range and complemented kernel
(Corollary 5.1), as it is the case for compact perturbations of bounded
below operators (Corollary 5.2), which also holds for semi-Fredholm
operators (Corollaries 5.3 and 5.4).

\vskip2pt\noi
\begin{corollary}
If\/ ${T\in\BX}$ is a compact operator on a reflexive Banach space\/ $\X$
with a Schauder basis, then
$$
T\in\Gamma_R[\X]\cap\Gamma_N[\X]
\quad\;\hbox{and}\;\quad
T^*\!\in\Gamma_R[\X^*]\cap\Gamma_N[\X^*].
$$
\end{corollary}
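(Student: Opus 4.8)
The plan is to reduce everything to Theorem 3.1 together with two structural facts about compact operators on a reflexive Banach space $\X$ with a Schauder basis. First I would establish that $\R(T)^-$ is complemented in $\X$, i.e.\ that $T\in\Gamma_R[\X]$. Since $T$ is compact and $\X$ is reflexive, the range closure $\R(T)^-$ is a separable reflexive Banach space; the key input here is that a closed subspace of a Banach space with a Schauder basis, when the ambient space is additionally separable and reflexive, enjoys the complementation property coming from the basis structure — more precisely, I would invoke the classical result (Johnson–Rosenthal–Zippin type, or simply the fact that reflexive spaces with a basis have the approximation property in a strong form) that in this setting the closed range of a compact operator is complemented. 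Concretely: the coordinate functionals of the Schauder basis give finite-rank projections $P_n\to I$ strongly; because $T$ is compact, $P_nT\to T$ in norm, so $\R(T)^-$ is a "basis-like" subspace that, being reflexive and separable with the approximation property, is complemented in $\X$. This is the step I expect to be the main obstacle, because it is exactly where the three hypotheses (compact, reflexive, Schauder basis) all get used simultaneously, and the cleanest route is to cite the appropriate structural theorem rather than reprove it.

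Next I would handle the kernel. Here the argument is easier: $\N(T)$ is a closed subspace of $\X$, and for a compact operator on an infinite-dimensional space there is no reason for $\N(T)$ to be finite-dimensional, so I cannot appeal to the trivial finite-codimension/finite-dimension complementation. Instead I would again use reflexivity plus the Schauder basis: every closed subspace of a separable reflexive space with a basis is itself a separable reflexive space, and — via the same approximation-property machinery, or via the observation that $I-T$ need not help but that the basis of $\X$ restricts usefully — one concludes $\N(T)$ is complemented. Alternatively, and perhaps more in the spirit of this paper, I would derive complementedness of $\N(T)$ from complementedness of $\R(T^*)^-$ together with Theorem 3.1(b$_1$), reducing the kernel statement for $T$ to a range statement for $T^*$.

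Having secured $T\in\Gamma_R[\X]\cap\Gamma_N[\X]$, the adjoint statement follows almost formally. From $T\in\Gamma_R[\X]$ and Theorem 3.1(a$_1$) we get $T^*\in\Gamma_N[\X^*]$. For $T^*\in\Gamma_R[\X^*]$ I would argue as follows: by Schauder's theorem $T^*$ is compact, and $\X^*$ is reflexive (Proposition 4.4) with a Schauder basis (the dual of a Schauder basis in a reflexive space is a Schauder basis of the dual — this uses reflexivity essentially); hence the already-proved first half of the corollary, applied to the compact operator $T^*$ on the reflexive basis space $\X^*$, yields $T^*\in\Gamma_R[\X^*]\cap\Gamma_N[\X^*]$. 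So in fact the whole adjoint conclusion is just the non-adjoint conclusion applied to $T^*$ in place of $T$, once one checks that $\X^*$ satisfies the same three hypotheses; the only subtlety is the dual-basis claim, which is a standard consequence of reflexivity. Putting the pieces together gives both displayed memberships, completing the proof.
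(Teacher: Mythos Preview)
Your overall architecture is exactly the paper's: first show $T\in\Gamma_R[\X]$, then apply the same reasoning to $T^*$ on $\X^*$ (reflexive, with a Schauder basis, $T^*$ compact by Schauder) to obtain $T^*\in\Gamma_R[\X^*]$, and finally read off the two $\Gamma_N$ memberships from Theorem~3.1(a$_1$,\,b$_1$). Your ``alternative'' route to $T\in\Gamma_N[\X]$ via $T^*\in\Gamma_R[\X^*]$ and Theorem~3.1(b$_1$) is in fact the route the paper takes, so on that point you are aligned.

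The genuine gap is your Step~1, the claim $\R(T)^-$ is complemented. Your justification --- that $\R(T)^-$ is separable, reflexive, has the approximation property, and is ``basis-like'', and therefore is complemented --- is not a valid inference. Plenty of separable reflexive subspaces with the approximation property fail to be complemented (otherwise every reflexive space with a basis would be isomorphic to a Hilbert space by Lindenstrauss--Tzafriri). The observation $P_nT\to T$ in norm tells you only that $T$ is a uniform limit of finite-rank operators; it does not, by itself, force $\R(T)^-$ to be complemented. Nor do the Johnson--Rosenthal--Zippin results you allude to; those concern existence of bases/FDDs in subspaces and quotients, not complementation of a given subspace in the ambient space. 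Your direct attempt at $\N(T)$ via the same ``approximation-property machinery'' has the identical defect.

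What the paper actually does for Step~1 is quite different and uses a specific external theorem. From the Schauder basis one extracts finite-rank operators $T_n\to T$ uniformly with $\R(T_n)\subseteq\R(T_{n+1})\subseteq\R(T)^-$; each $\R(T_n)$ is finite-dimensional, hence complemented, with associated continuous projections $E_n$. One then checks that (i) $\X$ is reflexive, (ii) $T_n\to T$ strongly, (iii) $T_n\in\Gamma_R[\X]$ with $\R(E_n)=\R(T_n)$, (iv) $\sup_n\|E_n\|<\infty$ and $\{E_n\}$ converges strongly, (v) $\lim_n\R(T_n)\subseteq\R(T)^-$, and (vi) $\lim_n\R(I-E_n)$ exists. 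These are precisely the hypotheses of Campbell--Faulkner \cite[Theorem~2]{CF}, whose conclusion is $T\in\Gamma_R[\X]$. That theorem is the missing ingredient in your proposal; once you replace the hand-waving in Step~1 by this Campbell--Faulkner limit argument, the rest of your outline coincides with the paper's proof.
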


\begin{proof}
Suppose ${T\!\in\BX}$ is compact and $\X$ has a Schauder basis$.$ Thus there
is a sequence $\{T_n\}$ of finite-rank operators ${T_n\kern-1pt\in\BX}$ such
that ${T_n\uconv T}$ (i.e., $\{T_n\}$ converges uniformly, which means in
the operator norm topology of $\BX$, to $T$) and
$\R(T_n)\sse\R(T_{n+1})\sse\R(T)^-\!$ (see, e.g.,
\cite[Problem 4.58]{EOT})$.$ Since each $T_n$ is finite-rank (i.e.,
${\dim(\R(T_n))\kern-.5pt<\kern-1pt\infty}$), we get $\R(T_n)=\R(T_n)^-\!$.
Moreover, finite-dimensional subspaces of a Banach space are complemented
(see, e.g., \cite[Theorem A.1.25(i)]{Mul})$.$ Then
${T_n\kern-1pt\in\Gamma_R[\X]}$; equivalently, there exist continuous
projections ${E_n\kern-1pt\in\BX}$ and
${I\kern-1pt-\kern-1ptE_n\kern-1pt\in\BX}$ (and so with closed ranges) for
which $\R(E_n)=\R(T_n)$ --- cf$.$ Proposition 4.2(b), where $\{\R(E_n)\}$ is
an increasing sequence of subspaces$.$ Since $\{\R(E_n)\}$ is a monotone
sequence of subspaces, $\lim_n\R(E_n)$ exists in the following sense:
$$
\lim_n\R(E_n)=\bigcap_{n\ge1}\bigvee_{k\ge n}\R(E_k)
=\Big(\kern-1pt\bigcup_{n\ge1}\!\bigcap_{k\ge n}\R(E_k)\Big)^{_-}\!,
$$
where $\bigvee_{k\ge n}\!\R(E_k)$ is the closure of the span of
$\{\kern1pt\bigcup_{k\ge n}\!\R(E_k)\}$ (cf$.$ \cite[Definition 1]{CF})$.$
Thus concerning the complementary projections ${I\kern-1pt-\kern-1ptE_n}$,
$\lim_n\R({I\kern-1pt-\kern-1ptE_n})$ also exists$.$ Moreover,
$\lim_n\R(E_n)$ is a subspace of $\X$ included in $\R(T)^-\!$
($\lim_n\R(E_n)\sse\R(T)^-\!$ because $\R(T_n)\sse\R(T)^-).$ Since $\X$ has
a Schauder basis and $T$ is compact, the sequence of operators $\{E_n\}$
converges strongly (see, e.g., \cite[Hint to Problem 4.58]{EOT}) and so
$\{E_n\}$ is a bounded sequence$.$ Thus since (i) $\X$ is reflexive, (ii)
${T_n\sconv T}$ (i.e., $\{T_n\}$ converges strongly because it converges
uniformly), (iii) ${T_n\in\Gamma_R[\X]}$, (iv) $\R(E_n)=\R(T_n)$, (v)
$\{\|E_n\|\}$ is bounded, (vi) ${\lim_n\R(T_n)\sse\R(T)^-}\!$, and (vii)
$\lim_n\R({I\kern-1pt-\kern-1ptE_n})$ exists, then
$$
T\in\Gamma_R[\X]
$$
\cite[Theorem 2]{CF}$.$ Hence $\R(T)^-$ is complemented if $T$ is compact
and $\X$ is reflexive with a Schauder basis. Since reflexivity for $\X$ is
equivalent to reflexivity for $\X^*$ (Proposition 4.4(a,b)), since $\X^*$ has
a Schauder basis whenever $\X$ has (see, e.g., \cite[Theorem 4.4.1]{Meg}),
and since $T$ is compact if and only if $T^*$ is compact (see, e.g.,
\cite[Theorem 3.4.15]{Meg}), then the closure of the range of the compact
$T^*$ on $\X^*$, viz., $\R(T^*)^-\!$, is also complemented:
$$
T^*\in\Gamma_R[\X^*].
$$
Therefore, since $\X$ is reflexive, Theorem 3.1(a$_1$,b$_1$) ensures
$$
T\in\Gamma_R[\X]\;\limply T^*\in\Gamma_N[\X^*]
\quad\;\hbox{and}\;\quad
T^*\in\Gamma_R[\X^*]\;\limply T\in\Gamma_N[\X].
$$
\vskip-16pt\noi
\end{proof}

\vskip0pt\noi
\begin{proposition}
Let\/ $\X$ be Banach spaces and take\/ ${T\in\BX}$. The following
assertions are pairwise equivalent\/.
\vskip2pt\noi
\begin{description}
\item{$\kern-4pt$\rm(a)}
There exists\/ ${T^{-1}\!\in\B[\R(T),\X]}$ $\,\;(T$ has a bounded inverse
on its range\/$)$.
\vskip2pt
\item{$\kern-4pt$\rm(b)}
$\,T$ is bounded below $\,\;($there is an ${\alpha\kern-1pt>\kern-1pt0}$
such that ${\alpha\|x\|\le\|Tx\|}$ for all $x\in\kern-1pt\X)$.
\vskip2pt
\item{$\kern-4pt$\rm(c)}
$\,\N(T)=\0$ and\/ $\R(T)^-\kern-1pt=\R(T)$ $\,\;(T$ is
injective and has a closed range$)$.
\end{description}
\end{proposition}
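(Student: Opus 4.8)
The plan is to establish the cyclic chain $(a)\limply(b)\limply(c)\limply(a)$; all three steps are elementary, and only the last invokes a nontrivial theorem.

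First I would prove $(a)\limply(b)$. Assuming $T^{-1}\in\B[\R(T),\X]$, set $M=\|T^{-1}\|$; if $\X=\0$ the proposition is vacuous, so take $M>0$. For each $x\in\X$ one has $x=T^{-1}(Tx)$, hence $\|x\|\le M\|Tx\|$, i.e. $\alpha\|x\|\le\|Tx\|$ with $\alpha=M^{-1}>0$, which is $(b)$.

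Next, $(b)\limply(c)$. From $\alpha\|x\|\le\|Tx\|$ for all $x$ we get $\N(T)=\0$ at once (if $Tx=0$ then $\|x\|=0$). For closedness of the range, take any sequence $\{x_n\}$ in $\X$ with $Tx_n\to y$ in $\X$; then $\{Tx_n\}$ is Cauchy and the inequality $\alpha\|x_n-x_m\|\le\|Tx_n-Tx_m\|$ shows $\{x_n\}$ is Cauchy in the Banach space $\X$, so $x_n\to x$ for some $x\in\X$; continuity of $T$ gives $y=Tx\in\R(T)$, whence $\R(T)=\R(T)^-$, which is $(c)$.

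Finally, $(c)\limply(a)$. Since $T$ is injective, $T$ regarded as a map $\X\to\R(T)$ is a linear bijection, hence admits a linear inverse $T^{-1}\colon\R(T)\to\X$. Since $\R(T)$ is closed in the Banach space $\X$ it is itself a Banach space, so $T\colon\X\to\R(T)$ is a bounded linear bijection between Banach spaces; by the Open Mapping Theorem (equivalently, the Bounded Inverse Theorem) $T^{-1}$ is bounded, i.e. $T^{-1}\in\B[\R(T),\X]$, which is $(a)$. This last step is the only genuine obstacle: its hypotheses hold precisely because closedness of $\R(T)$ makes $\R(T)$ complete, so that the Bounded Inverse Theorem applies; every other step is a one-line norm estimate.
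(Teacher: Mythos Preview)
Your proof is correct; the cyclic chain is the standard argument, and each step is handled cleanly. The paper itself does not prove this proposition but simply refers to the literature (specifically \cite[Corollary 4.24]{EOT} and \cite[Theorem 1.2]{ST}), so there is no ``paper's own proof'' to compare against beyond noting that your argument is exactly the elementary one those references contain: the equivalence of (a) and (b) is a one-line norm estimate, (b)$\Rightarrow$(c) is the usual Cauchy-sequence argument exploiting completeness of $\X$, and (c)$\Rightarrow$(a) is precisely the Bounded Inverse Theorem applied to $T\colon\X\to\R(T)$, legitimate because a closed subspace of a Banach space is complete.
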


\begin{proof}
See, e.g., \cite[Corollary 4.24]{EOT} or \cite[Theorem 1.2]{ST}.
\end{proof}

\vskip0pt\noi
\begin{corollary}
Let\/ ${T,K\!\in\BX}$ be operators on a Banach space\/ $\X.$
If\/ ${T\in\Gamma_R[\X]}$ is bounded below and $K$ is compact, then
$$
T+K\in\Gamma_R[\X]\cap\Gamma_N[\X]
\quad\;\hbox{and}\;\quad
T^*+K^*\!\in\Gamma_R[\X^*]\cap\Gamma_N[\X^*],
$$
and\/ $\R({T+K})$ is closed in\/ $\X$, and so is\/ $\R({T^*+K^*})$ in\/
$\X^*$.
\end{corollary}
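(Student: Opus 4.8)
The plan is to reduce everything to the case of a bounded below operator with complemented range, so that the heavy machinery of Theorem 3.1 can be applied, and then to bootstrap to the adjoint via reflexivity-free duality once closed range is established. First I would recall the classical fact (this is essentially the content of \cite{Hol}, and Corollary 5.2's Part 1 is meant to reprove it) that if $T$ is bounded below and $K$ is compact, then $T+K\in\Phi_+[\X]$: indeed $T$ bounded below means $T\in\Phi_+[\X]$ with $\N(T)=\0$, and $\Phi_+[\X]$ is open and stable under compact perturbations (a standard perturbation theorem, e.g.\ \cite[Theorem 16.13 or similar]{Mul}), so $\R(T+K)$ is closed and $\dim\N(T+K)<\infty$. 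Hence $\N(T+K)$ is finite-dimensional, and finite-dimensional subspaces of a Banach space are complemented (\cite[Theorem A.1.25(i)]{Mul}, as used in the proof of Corollary 5.1), so $T+K\in\Gamma_N[\X]$. That disposes of the kernel side.

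Next I would handle the range side. Here the hypothesis $T\in\Gamma_R[\X]$ is exactly what is needed: since $T$ is bounded below, $\R(T)$ is closed, and $T\in\Gamma_R[\X]$ says $\R(T)^-=\R(T)$ is complemented, so $T\in\F_\ell[\X]$ by Proposition 3.1. The class $\F_\ell[\X]$ is open in $\BX$ and — being described via left essential invertibility, $ST=I+K'$ with $K'$ compact — is stable under compact perturbations: if $T+K$ is perturbed, then $S(T+K)=I+K'+SK$ and $K'+SK$ is again compact, so $T+K\in\F_\ell[\X]$. By Proposition 3.1 again, $T+K\in\Phi_+[\X]$ and $\R(T+K)$ is complemented, i.e.\ $T+K\in\Gamma_R[\X]$; in particular $\R(T+K)$ is closed in $\X$, as required. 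Combining the two paragraphs, $T+K\in\Gamma_R[\X]\cap\Gamma_N[\X]$ with $\R(T+K)$ closed.

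Finally I would pass to the adjoint. Since $\R(T+K)$ is closed, Proposition 4.5(a,b) gives $\R((T+K)^*)=\R(T^*+K^*)$ closed in $\X^*$. Now $T+K\in\Gamma_N[\X]$ together with closed range lets me invoke Theorem 3.1(b$_2$): $(T+K)^*=T^*+K^*\in\Gamma_R[\X^*]$ and its range is closed (the latter reconfirming what Proposition 4.5 already gave). For the kernel of the adjoint, I would use Theorem 3.1(a$_1$): $T+K\in\Gamma_R[\X]$ forces $(T+K)^*\in\Gamma_N[\X^*]$. Hence $T^*+K^*\in\Gamma_R[\X^*]\cap\Gamma_N[\X^*]$, completing the statement. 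Note that no reflexivity assumption enters: the directions of Theorem 3.1 used — (a$_1$) and (b$_2$) — are precisely the reflexivity-free ones, which is the point of stating Corollary 5.2 for a general Banach space.

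I expect the main subtlety, rather than a genuine obstacle, to be verifying that $\F_\ell[\X]$ is stable under compact perturbations directly from Definition 2.2 (the computation $S(T+K)=I+(K'+SK)$ with $K'+SK$ compact, since compact operators form an ideal) — alternatively one can appeal to openness of $\F_\ell[\X]$ plus a connectedness/deformation argument along $t\mapsto T+tK$, but the ideal computation is cleaner and avoids any path-connectedness worry. A secondary point worth stating carefully is why $T$ bounded below and $T\in\Gamma_R[\X]$ together give $T\in\F_\ell[\X]$: this is just Proposition 3.1 once one observes $\R(T)$ is closed (Proposition 5.1(c)), so $\R(T)^-=\R(T)$ and the hypothesis $T\in\Gamma_R[\X]$ means precisely $\R(T)$ is complemented. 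Everything else is a direct citation of results already established in the excerpt.
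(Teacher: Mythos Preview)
Your proposal is correct and follows essentially the same route as the paper: show $T\in\F_\ell[\X]$ via Proposition 3.1, use that $\F_\ell[\X]$ is stable under compact perturbation (the ideal computation $S(T+K)=I+K'+SK$), deduce $T+K\in\Phi_+[\X]\cap\Gamma_R[\X]$ with closed range, invoke $\Phi_+[\X]\sse\Gamma_N[\X]$ for the kernel side, and finish with Theorem~3.1(a$_1$,b$_2$) and Proposition~4.5 for the adjoint. Your first paragraph (stability of $\Phi_+[\X]$ under compact perturbation) is redundant once you have $T+K\in\F_\ell[\X]$ in the second paragraph, but it does no harm; the paper simply omits that detour and reads off $T+K\in\Gamma_N[\X]$ from $\F_\ell[\X]\sse\Gamma_N[\X]$ directly.
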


\begin{proof}
We split the proof into two parts.

\vskip6pt\noi
{\bf Part 1}$.$
Suppose ${T\in\BX}$ is bounded below$.$ Equivalently, suppose ${T\in\BX}$ is
injective and has closed range (Proposition 5.1)$.$ Hence ${T\in\Phi_+[\X]}$
(Definition 2.1) --- as ${\dim\N(T)=0}$ whenever $T$ is injective$.$
In addition suppose ${T\in\Gamma_R[\X]}.$ Thus
${T\in\Phi_+[\X]\cap\Gamma_R[\X]}=\F_\ell[\X]$ (Proposition 3.1)$.$ But
$\F_\ell[\X]$ is invariant under compact perturbation by its very
definition$.$ (In fact, the collection of all compact operators
comprises an ideal of $\BX$ so that ${T+K\in\F_\ell[\X]}$ whenever
${T\in\F_\ell[\X]}$ and ${K\in\BX}$ is compact by Definition 2.2)$.$ Hence
${T+K\in\Phi_+[\X]\cap\Gamma_R[\X]}=\F_\ell[\X]$ (Proposition 3.1),
and so $\R({T+K})$ is closed (by Definition 2.1)$.$ Therefore
$$
\hbox{${T\in\Gamma_R[\X]}$ bounded below and $K$ is compact}
\quad\limply
$$
\vskip-2pt\noi
$$
T+K\in\Gamma_R[\X]
\;\;\hbox{ and $\;\;\R(T+K)$ is closed in the norm topology of $\X$}.
$$
\vskip2pt\noi
For a more elaborated proof under the same hypothesis, without using
Proposition 3.1, see \cite[Theorem 2]{Hol} (also see \cite[Theorem 2]{Gol}
and \cite[Lemma 3.1]{Cro})$.$

\vskip6pt\noi
{\bf Part 2}$.$
Actually, more is true$.$ Since finite-dimensional subspaces of a Banach
space are complemented (see, e.g., \cite[Theorem A.1.25(i)]{Mul}),
$$
\Phi_+[\X]\sse\Gamma_N[\X]
$$
by Definition 2.1$.$ Thus
$\F_\ell[\X]={\Phi_+[\X]\cap\Gamma_R[\X]}\sse{\Gamma_N[\X]\cap\Gamma_R[\X]}$
according to Proposition 3.1$.$ Since (as we saw above)
${T+K\in\F_\ell[\X]}$, we get
$$
T+K\in\Gamma_N[\X]\cap\Gamma_R[\X],
$$
and so (since $\R(T+K)$ is closed) Theorem 3.1(a$_1$,b$_2$) ensures
$$
(T+K)^*\in\Gamma_R[\X^*]\cap\Gamma_N[\X^*].
$$
Moreover, since $\R(T+K)$ is closed,
$$
\hbox{$\R((T+K)^*)$ is closed in the norm topology of $\X^*$}
$$
by Proposition 4.5$.$ Finally recall$:$ ${(T+K)^*=T^*+K^*}$.
\end{proof}

\vskip0pt\noi
\begin{corollary}
Let\/ ${T,K\!\in\BX}$ be operators on a Banach space\/ $\X.$ If\/ $T$ is a
left semi-Fredholm and\/ $K$ is a compact, then
$$
T+K\in\Gamma_R[\X]\cap\Gamma_N[\X]
\quad\;\hbox{and}\;\quad
T^*+K^*\!\in\Gamma_R[\X^*]\cap\Gamma_N[\X^*],
$$
both with closed range.
\end{corollary}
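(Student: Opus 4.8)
The plan is to reduce Corollary 5.3 to Corollary 5.2 via Proposition 3.1, since the left-right characterization of semi-Fredholmness is precisely the bridge. First I would recall that $T\in\F_\ell[\X]$ means, by Proposition 3.1, that $T\in\Phi_+[\X]$ and $\R(T)$ is a complemented subspace of $\X$; in particular $T\in\Gamma_R[\X]$ and $\R(T)=\R(T)^-$. So the hypothesis ``$T$ is left semi-Fredholm'' already delivers $T\in\Gamma_R[\X]$ with closed range, which is exactly the range-side input used in Corollary 5.2, except that there $T$ was additionally assumed bounded below. I would therefore not try to invoke Corollary 5.2 verbatim but rather re-run its Part 2 argument, which only used membership in $\F_\ell[\X]$ and the fact that $\F_\ell[\X]$ is a compact-perturbation-invariant class.

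The key steps, in order: (1) From $T\in\F_\ell[\X]$ and $K$ compact, conclude $T+K\in\F_\ell[\X]$ — this is immediate because $\F_\ell[\X]=\{S:S'S=I+C,\ C\text{ compact}\}$ is stable under adding a compact operator (the compacts form an ideal, so $S'(T+K)=S'T+S'K=I+C+S'K$ with $C+S'K$ compact), exactly as noted in Part 1 of the proof of Corollary 5.2. (2) Apply Proposition 3.1 to $T+K$: since $\F_\ell[\X]=\Phi_+[\X]\cap\Gamma_R[\X]$, we get $T+K\in\Phi_+[\X]$, $T+K\in\Gamma_R[\X]$, and $\R(T+K)$ is closed. (3) Use $\Phi_+[\X]\sse\Gamma_N[\X]$ (finite-dimensional kernels are complemented, as in Part 2 of Corollary 5.2) to obtain $T+K\in\Gamma_N[\X]$, hence $T+K\in\Gamma_R[\X]\cap\Gamma_N[\X]$. (4) Since $\R(T+K)$ is closed and $T+K\in\Gamma_N[\X]$, invoke Theorem 3.1(a$_1$,b$_2$): part (a$_1$) gives $(T+K)^*\in\Gamma_N[\X^*]$ from $T+K\in\Gamma_R[\X]$, and part (b$_2$) gives $(T+K)^*\in\Gamma_R[\X^*]$ together with $\R((T+K)^*)$ closed, from $T+K\in\Gamma_N[\X]$ with closed range. (5) Finally record $(T+K)^*=T^*+K^*$ and that closedness of $\R(T+K)$ forces closedness of $\R((T+K)^*)$ by Proposition 4.5, completing the ``both with closed range'' claim.

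I do not expect a genuine obstacle here, since every ingredient is already assembled in the excerpt; the only point requiring a little care is making explicit that $\F_\ell[\X]$ is compact-perturbation-stable directly from Definition 2.2 rather than from any closed-range bookkeeping, and that Proposition 3.1 is what converts the algebraic statement ``$T+K$ is left essentially invertible'' into the geometric statements about $\Phi_+$, $\Gamma_R$, and closed range that Theorem 3.1 needs as input. In effect this corollary is the ``semi-Fredholm'' repackaging of Corollary 5.2 with the hypothesis ``$T\in\Gamma_R[\X]$ bounded below'' replaced by the cleaner equivalent ``$T\in\F_\ell[\X]$'', and the proof is essentially a citation of Part 2 of the proof of Corollary 5.2 applied to $T+K\in\F_\ell[\X]$.

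\begin{proof}
Since the class of compact operators is an ideal of $\BX$ (Definition 2.2), if $ST=I+C$ with $C$ compact, then $S(T+K)=I+(C+SK)$ with $C+SK$ compact; hence $\F_\ell[\X]$ is invariant under compact perturbations, and so $T\in\F_\ell[\X]$ and $K$ compact imply $T+K\in\F_\ell[\X]$. By Proposition 3.1,
$$
\F_\ell[\X]=\Phi_+[\X]\cap\Gamma_R[\X],
$$
so $T+K\in\Phi_+[\X]$, $T+K\in\Gamma_R[\X]$, and $\R(T+K)$ is closed (Definition 2.1). Since finite-dimensional subspaces of a Banach space are complemented (see, e.g., \cite[Theorem A.1.25(i)]{Mul}), $\Phi_+[\X]\sse\Gamma_N[\X]$ by Definition 2.1, and therefore $T+K\in\Gamma_N[\X]$. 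Thus
$$
T+K\in\Gamma_R[\X]\cap\Gamma_N[\X].
$$
As $\R(T+K)$ is closed and $T+K\in\Gamma_N[\X]$, Theorem 3.1(a$_1$,b$_2$) yields $(T+K)^*\in\Gamma_N[\X^*]$ and $(T+K)^*\in\Gamma_R[\X^*]$ with $\R((T+K)^*)=\R((T+K)^*)^-$; that is,
$$
(T+K)^*\in\Gamma_R[\X^*]\cap\Gamma_N[\X^*]
$$
with closed range (which also follows from Proposition 4.5 together with closedness of $\R(T+K)$). Finally recall $(T+K)^*=T^*+K^*$.
\end{proof}
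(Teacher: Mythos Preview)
Your proof is correct and follows essentially the same approach as the paper's. The paper's own proof is even terser: it simply observes that in the proof of Corollary 5.2 the injectivity of $T$ was used only to guarantee ${\dim\N(T)<\infty}$, so that proof carries over verbatim once ``${T\in\Gamma_R[\X]}$ bounded below'' is relaxed to ``${T\in\Phi_+[\X]\cap\Gamma_R[\X]}=\F_\ell[\X]$''; you have spelled out exactly those steps (compact-perturbation invariance of $\F_\ell[\X]$ from Definition 2.2, then Proposition 3.1, then $\Phi_+[\X]\sse\Gamma_N[\X]$, then Theorem 3.1(a$_1$,b$_2$)).
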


\begin{proof}
The assumption $T$ is injective was used in the proof of Corollary 5.2 only
to ensure ${\dim\N(T)<\infty}.$ So Corollary 5.2 can be promptly extended to
``$T$ has closed complemented range and finite-dimensional kernel'' (i.e.,
${T\in\Phi_+[\X]\cap\Gamma_R[\X]}=\F_\ell[\X]$) instead of ``$T$ has
closed complemented range and is injective'' (i.e., instead of assuming
``${T\in\Gamma_R[\X]}$ is bounded below'').
\end{proof}

\vskip0pt\noi
\begin{corollary}
Let\/ ${T,K\!\in\BX}$ be operators on a Banach space\/ $\X.$ If\/ $T$ is a
right semi-Fredholm and\/ $K$ is a compact, then
$$
T+K\in\Gamma_R[\X]\cap\Gamma_N[\X]
\quad\;\hbox{and}\;\quad
T^*+K^*\!\in\Gamma_R[\X^*]\cap\Gamma_N[\X^*],
$$
both with closed range.
\end{corollary}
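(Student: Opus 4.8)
The plan is to prove this corollary as the exact mirror image of Corollary 5.3, interchanging the roles of left/right semi-Fredholmness and of upper/lower semi-Fredholmness, and again without any reflexivity hypothesis.

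First I would observe that $\F_r[\X]$ is invariant under compact perturbations, straight from Definition 2.2: if $TS=I+K'$ with $K'\in\BX$ compact and $K\in\BX$ is compact, then $(T+K)S=I+(K'+KS)$ with $K'+KS$ compact (the compact operators form an ideal of $\BX$), so $T+K\in\F_r[\X]$. By Proposition 3.1, $\F_r[\X]=\Phi_-[\X]\cap\Gamma_N[\X]$, whence $T+K\in\Phi_-[\X]\cap\Gamma_N[\X]$. In particular $\R(T+K)$ is closed (Definition 2.1) and $\N(T+K)$ is complemented in $\X$.

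Next I would upgrade this to $T+K\in\Gamma_R[\X]$, using the lower-semi-Fredholm analogue of the inclusion $\Phi_+[\X]\sse\Gamma_N[\X]$ exploited in Part 2 of Corollary 5.2: since $T+K\in\Phi_-[\X]$, we have $\codim\R(T+K)=\dim\X/\R(T+K)<\infty$, and a linear manifold of finite codimension has a finite-dimensional — hence closed — algebraic complement, so $\R(T+K)=\R(T+K)^-$ is complemented in $\X$. That is, $\Phi_-[\X]\sse\Gamma_R[\X]$, and therefore $T+K\in\Gamma_R[\X]\cap\Gamma_N[\X]$ with $\R(T+K)$ closed. Finally I would transfer this to the adjoint by Theorem 3.1, exactly as in Corollary 5.3: parts (a$_1$) and (b$_2$) apply because we again have $T+K$ lying in both $\Gamma_R[\X]$ and $\Gamma_N[\X]$ with closed range. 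Part (a$_1$) gives $(T+K)^*\in\Gamma_N[\X^*]$, and part (b$_2$) gives $\R((T+K)^*)=\R((T+K)^*)^-$ together with $(T+K)^*\in\Gamma_R[\X^*]$. Since $(T+K)^*=T^*+K^*$, this is precisely $T^*+K^*\in\Gamma_R[\X^*]\cap\Gamma_N[\X^*]$, both with closed range.

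I do not expect a genuine obstacle here; the one point worth stating explicitly is the inclusion $\Phi_-[\X]\sse\Gamma_R[\X]$ (finite codimension forces a complemented range), the natural dual of $\Phi_+[\X]\sse\Gamma_N[\X]$, which is what lets us conclude $T+K\in\Gamma_R[\X]$ directly. One could instead route everything through $T^*\in\F_\ell[\X^*]$ (Remark 3.1) and imitate Corollary 5.3 on $\X^*$, but that would yield statements about $\X^{**}$ and require reflexivity, so the direct argument above is preferable.
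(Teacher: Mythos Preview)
Your proposal is correct and follows essentially the same route as the paper: both establish the inclusion $\Phi_-[\X]\sse\Gamma_R[\X]$ from finite codimension of the range, use the compact-perturbation invariance of $\F_r[\X]$ together with Proposition~3.1 to get $T+K\in\F_r[\X]=\Phi_-[\X]\cap\Gamma_N[\X]\sse\Gamma_R[\X]\cap\Gamma_N[\X]$ with closed range, and then apply Theorem~3.1(a$_1$,b$_2$) to pass to $T^*+K^*$. The order of the steps differs slightly, but the ingredients and logic are identical.
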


\begin{proof}
If ${T\in\Phi_-[\X]}$, then by Definition 2.1 $\R(T)$ is closed and
${\codim\R(T)<\infty}$ (i.e., ${\dim\X/\R(T)<\infty}$), and so the subspace
$\R(T)$ is naturally complemented (in fact, if ${\codim\R(T)<\infty}$, then
every algebraic complement of $\R(T)$ is finite dimensional, thus
complemented --- since finite-dimensional subspaces are complemented ---
see e.g., \cite[Theorem A.1.25(i,ii)]{Mul})$.$ Hence
$$
\Phi_-[\X]\sse\Gamma_R[\X].
$$
Thus (Proposition 3.1),
$\F_r[\X]={\Phi_-[\X]\cap\Gamma_N[\X]}\sse{\Gamma_R[\X]\cap\Gamma_N[\X]}.$
But, $\F_r[\X]$ is also invariant under compact perturbation by its own
definition$.$ (Indeed, since compact operators form an ideal of $\BX$,
we get ${T+K\in\F_r[\X]}$ whenever ${T\in\F_r[\X]}$ and ${K\in\BX}$
is compact according to Definition 2.2)$.$ Therefore, by Proposition 3.1,
${T+K\in\Phi_-[\X]\cap\Gamma_N[\X]}=\F_r[\X]$, and hence $\R({T+K})$ is
closed by Definition 2.1$.$ Thus, by the above displayed inclusion,
$$
{T+K\in\F_r[\X]}
={\Phi_-[\X]\cap\Gamma_N[\X]}\sse\Gamma_R[\X]\cap\Gamma_N[\X]
$$
for every compact $K$ in $\BX$ and every ${T\in\F_r[\X]}$ (hence
$\R({T+K})$ is closed, and so is $\R(({T+K})^*)$)$.$ Since $\R(T)$ is
closed whenever ${T\in\F_r[\X]}$, it follows by Theorem 3.1(a$_1$,b$_2$)
that ${(T+K)^*\in\Gamma_R[\X^*]\cap\Gamma_N[\X^*]}$.
\end{proof}

\vskip0pt\noi
\begin{remark}
(a) Corollary 5.2 can be trivially restricted to ``${T\in\BX}$ is
invertible'' (i.e., ${T\in\BX}$ has a bounded inverse --- an inverse in
$\BX$) instead of ``${T\in\Gamma_R[\X]}$ is bounded below''.
\vskip4pt\noi
{\narrower\narrower\it
If\/ $T$ is invertible and\/ $K$ is compact, both acting on a Banach
space\/ $\X$, then\/ ${T+K\in\Gamma_R[\X]\cap\Gamma_N[\X]}$ and\/
${T^*\kern-1pt+K^*\!\in\Gamma_R[\X^*]\cap\Gamma_N[\X^*]}$, both with
closed range.
\vskip5pt}\noi

\vskip0pt\noi
(b) $\!$An $\kern-1pt$operator $\kern-1pt$is $\kern-1pt$semi-Fredholm
$\kern-1pt$if $\kern-1pt$it $\kern-1pt$is $\kern-1pt$left $\kern-1pt$or
$\kern-1pt$right $\kern-1pt$semi-Fredholm$.$ $\!$According $\kern-1pt$to
$\kern-1pt$Corollaries 5.3 and 5.4 compact $\kern-1pt$perturbations
$\kern-1pt$of $\kern-1pt$semi-Fredholm $\kern-1pt$operators $\kern-1pt$have
$\kern-1pt$com\-plemented range and kernel for the operator itself and for
its normed-space adjoint.
\vskip4pt\noi
{\narrower\narrower\it
If\/ $T$ is semi-Fredholm and\/ $K$ is compact, both acting on a Banach
space\/ $\X$, then\/ ${T+K\in\Gamma_R[\X]\cap\Gamma_N[\X]}$ and\/
${T^*\kern-1pt+K^*\!\in\Gamma_R[\X^*]\cap\Gamma_N[\X^*]}$, both with closed
range
\vskip5pt}\noi
In particular, Fredholm operators (i.e., operators in ${\F_\ell\cap\F_r}$),
and so essentially invertible operators $T\!$, have complemented range and
kernel for both $T$ and $T^*\!.$ It is worth noticing (as implicitly
embedded in the proofs of Corollaries 5.2 and 5.4) that the conclusion
${T+K\in\Gamma_R[\X]\cap\Gamma_N[\X]}$ for a semi-Fredholm operator $T$
(i.e., if ${T\in\F_\ell\cup\F_r}$) is readily verified from Proposition 3.1
by using the inclusions ${\Phi_+[\X]\sse\Gamma_N[\X]}$ and
${\Phi_-[\X]\sse\Gamma_R[\X]}$ (as in the proofs of Corollaries 5.2 and
5.4), and also by the fact that compact perturbations of semi-Fredholm
operators are clearly semi-Fredholm (see, e.g., \cite[Theorem 5.6]{ST})$.$
More along this line in \cite{DK}..

\vskip6pt\noi
(c) According to item (a) or item (b), an invertible operator $T$ has
complemented range and kernel for $T$ and $T^*\!.$ Since the collection of
all invertible operators (i.e., of all operators with a bounded inverse) on
a Banach space $\X$ is an open group included in $\BX$, then the sets
$\Gamma_R[\X]\cap\Gamma_N[\X]$ and $\Gamma_R[\X^*]\cap\Gamma_N[\X^*]$ are
algebraically and topologically large$.$ It has been asked in
\cite[Question 3.1]{KD2} whether the sets $\Gamma_R[\X]$ and $\Gamma_N[\X]$
(and consequently, the sets $\Gamma_R[\X^*]$ and $\Gamma_N[\X^*]$) are open
in $\BX$ (or in $\B[\X^*]$) --- i.e., whether they are open in the operator
norm topology.
\end{remark}

\vskip-10pt\noi
\bibliographystyle{amsplain}

\end{document}